\newcommand{\vertex}[3]{\node [vertex] (#1) at (#2, #3 * 1.7) {};}
\newcommand{\Tran}{\mathrm{Tran}}
\newcommand{\Sym}{\mathrm{Sym}}
\newcommand{\id}{\mathrm{id}}
\newcommand{\Char}{\mathrm{char}}
\newcommand{\CA}{\mathrm{CA}}
\newcommand{\ICA}{\mathrm{ICA}}
\newcommand{\LCA}{\mathrm{LCA}}
\newcommand{\End}{\mathrm{End}}
\newcommand{\rev}{\mathrm{rev}}
\theoremstyle{plain}
\newtheorem{corollary}{Corollary}
\newtheorem{lemma}{Lemma}
\newtheorem{proposition}{Proposition}
\newtheorem{theorem}{Theorem}
\newtheorem*{claim*}{Claim}
\theoremstyle{definition}
\newtheorem{definition}{Definition}
\newtheorem{example}{Example}
\newtheorem{remark}{Remark}
\newtheorem{question}{Question}
\begin{document}

\title{Elementary, Finite and Linear vN-Regular Cellular Automata}
\author{Alonso Castillo-Ramirez and Maximilien Gadouleau}

\newcommand{\Addresses}{{
  \bigskip
  \footnotesize

A. Castillo-Ramirez (Corresponding author), \textsc{Universidad de Guadalajara, CUCEI, Departamento de Matem\'aticas, Guadalajara, M\'exico.} \par \nopagebreak
Email: \texttt{alonso.castillor@academicos.udg.mx}

\medskip

M. Gadouleau, \textsc{Department of Computer Science, Durham University, South Road, Durham DH1 3LE, U.K.} \par \nopagebreak
 Email: \texttt{m.r.gadouleau@durham.ac.uk}
}}

\maketitle

\begin{abstract}
Let $G$ be a group and $A$ a set. A cellular automaton (CA) $\tau$ over $A^G$ is von Neumann regular (vN-regular) if there exists a CA $\sigma$ over $A^G$ such that $\tau \sigma\tau = \tau$, and in such case, $\sigma$ is called a weak generalised inverse of $\tau$. In this paper, we investigate vN-regularity of various kinds of CA. First, we establish that, over any nontrivial configuration space, there always exist CA that are not vN-regular. Then, we obtain a partial classification of elementary vN-regular CA over $\{ 0,1\}^{\mathbb{Z}}$; in particular, we show that rules like 128 and 254 are vN-regular (and actually generalised inverses of each other), while others, like the well-known rules $90$ and $110$, are not vN-regular. Next, when $A$ and $G$ are both finite, we obtain a full characterisation of vN-regular CA over $A^G$. Finally, we study vN-regular linear CA when $A= V$ is a vector space over a field $\mathbb{F}$; we show that every vN-regular linear CA is invertible when $V= \mathbb{F}$ and $G$ is torsion-free elementary amenable (e.g. when $G=\mathbb{Z}^d, \ d \in \mathbb{N}$), and that every linear CA is vN-regular when $V$ is finite-dimensional and $G$ is locally finite with $\Char(\mathbb{F}) \nmid o(g)$ for all $g \in G$.    
 \\
\\
\textbf{Keywords}: Cellular automata, elementary cellular automata, finite cellular automata, linear cellular automata, monoids, von Neumann regularity, generalised inverse.
\end{abstract}


\section{Introduction} \label{sec:introduction}

In this paper we follow the general setting for cellular automata (CA) presented in \cite{CSC10}. For any group $G$ and any set $A$, the \emph{configuration space} $A^G$ is the set of all functions from $G$ to $A$. A \emph{cellular automaton} over $A^G$ is a transformation of $A^G$ defined via a finite memory set and a local function (see Definition \ref{def:ca} for the precise details). Most of the classical literature on CA focus on the case when $G=\mathbb{Z}^d$, for $d\geq1$, and $A$ is a finite set (see \cite{Ka05}), but important results have been obtained for larger classes of groups (e.g., see \cite{CSC10} and references therein).

Recall that a \emph{semigroup} is a set equipped with an associative binary operation, and that a \emph{monoid} is a semigroup with an identity element. If $M$ is a group, or a monoid, write $K \leq M$ if $K$ is a subgroup, or a submonoid, of $M$, respectively. 

Let $\CA(G;A)$ be the set of all CA over $A^G$. It turns out that, equipped with the composition of functions, $\CA(G;A)$ is a monoid. In this paper we apply functions on the right; hence, for $\tau,\sigma \in \CA(G;A)$, the composition $\tau \circ \sigma$, denoted simply by $\tau \sigma$, means applying first $\tau$ and then $\sigma$. 

A cellular automaton $\tau \in \CA(G;A)$ is \emph{invertible}, or \emph{reversible}, or \emph{a unit}, if there exists $\sigma \in \CA(G;A)$ such that $\tau \sigma = \sigma \tau = \id$. In such case, $\sigma$ is called \emph{the inverse} of $\tau$ and denoted by $\sigma = \tau^{-1}$. When $A$ is finite, it may be shown that $\tau \in \CA(G;A)$ is invertible if and only if it is a bijective function (see \cite[Theorem 1.10.2]{CSC10}). Denote the set of all invertible CA over $A^G$ by $\ICA(G;A)$; this is in fact a group, called the \emph{group of units} of $\CA(G;A)$. 

We shall consider the notion of regularity that, as cellular automata, was introduced by John von Neumann, and has been widely studied in both semigroup and ring theory. A cellular automaton $\tau \in \CA(G;A)$ is \emph{von Neumann regular} (vN-regular) if there exists $\sigma \in \CA(G;A)$ such that $\tau \sigma \tau = \tau$; in this case, $\sigma$ is called a \emph{weak generalised inverse} of $\tau$. Equivalently, $\tau \in \CA(G;A)$ is vN-regular if and only if there exists $\sigma \in \CA(G;A)$ mapping every configuration in the image of $\tau$ to one of its preimages under $\tau$ (see Lemma \ref{le-vN-regular}). Clearly, the notion of vN-regularity generalises reversibility.  

In general, for any semigroup $S$ and $a, b \in S$, we say that $b$ is \emph{a weak generalised inverse} of $a$ if 
\[ aba=a. \]
We say that $b$ is \emph{a generalised inverse} (often just called \emph{an inverse}) of $a$ if
\[ aba = a \text{ and } bab= b. \]
An element $a \in S$ may have none, one, or more (weak) generalised inverses. It is clear that any generalised inverse of $a$ is also a weak generalised inverse; not so obvious is that, given the set $W(a)$ of weak generalised inverses of $a$ we may obtain the set $V(a)$ of generalised inverses of $a$ as follows (see \cite[Exercise 1.9.7]{CP61}):
\[ V(a) = \{ b a b^\prime : b, b^\prime \in W(a)  \}. \]
Thus, an element $a \in S$ is \emph{vN-regular} if it has at least one generalised inverse (which is equivalent to having at least one weak generalised inverse). The semigroup $S$ itself is called \emph{vN-regular} if all of its elements are vN-regular. Many of the well-known types of semigroups are vN-regular, such as idempotent semigroups (or \emph{bands}), full transformation semigroups, and Rees matrix semigroups. Among various advantages, vN-regular semigroups have a particularly manageable structure which may be studied using Green's relations. For further basic results on vN-regular semigroups see \cite[Section 1.9]{CP61}. 

Another generalisation of reversible CA has appeared in the literature before \cite{ZZ12,ZZ15} using the concept of \emph{Drazin inverse} \cite{D58}. However, as Drazin invertible elements are a special kind of vN-regular elements, our approach turns out to be more general and natural.  

In the following sections we study the vN-regular elements in monoids of CA. First, in Section \ref{vN-regular} we present some basic results and examples, and we establish that, except for the trivial cases $\vert G \vert = 1$ and $\vert A \vert = 1$, the monoid $\CA(G;A)$ is not vN-regular. 

In Section \ref{sec-elementary}, we obtain a partial classification of the vN-regular elementary CA in $\CA(\mathbb{Z};\{ 0,1 \})$. We divide the $256$ elementary CA into $48$ equivalence classes that preserve vN-regularity: this is an extension of the usual division of elementary CA into $88$ equivalence classes that preserve dynamical properties. Among various results, we show that rules like 128 and 254 are vN-regular (and actually generalised inverses of each other), while others, like the well-known rules $90$ and $110$, are not vN-regular. Our classification is only partial as the vN-regularity of $11$ classes could not be determined (see Table \ref{elementary}). 

In Section \ref{finite}, we study the vN-regular elements of $\CA(G;A)$ when $G$ and $A$ are both finite; in particular, we characterise them and describe a vN-regular submonoid. 

Finally, in Section \ref{linear}, we study the vN-regular elements of the monoid $\LCA(G; V)$ of linear CA, when $V$ is a vector space over a field $\mathbb{F}$. Specifically, using results on group rings, we show that, when $G$ is torsion-free elementary amenable (e.g., $G=\mathbb{Z}^d$), $\tau \in \LCA(G; \mathbb{F})$ is vN-regular if and only if it is invertible, and that, for finite-dimensional $V$, $\LCA(G; V)$ itself is vN-regular if and only if $G$ is locally finite and $\Char(\mathbb{F}) \nmid \vert \langle g \rangle \vert$, for all $g \in G$. Finally, for the particular case when $G \cong \mathbb{Z}_n$ is a cyclic group, $V := \mathbb{F}$ is a finite field, and $\Char(\mathbb{F}) \mid n$, we count the total number of vN-regular elements in $\LCA(\mathbb{Z}_n ;\mathbb{F})$.  

The present paper is an extended version of \cite{CRG17a}. Besides improving the general exposition, Theorem \ref{test-non-regular}, Section \ref{sec-elementary} and Theorem \ref{structure-monoid} are completely new, and Theorem \ref{torsion-free} is corrected (as the proof of Theorem 5 in \cite{CRG17a} is flawed).


\section{vN-regular cellular automata} \label{vN-regular}

For any set $X$, let $\Tran(X)$ and $\Sym(X)$ be the sets of all functions and all bijective functions of the form $\tau : X \to X$, respectively. Equipped with the composition of functions, $\Tran(X)$ is known as the \emph{full transformation monoid} on $X$, and $\Sym(X)$ is the \emph{symmetric group} on $X$. When $X$ is a finite set of size $\alpha$, we simply write $\Tran_\alpha$ and $\Sym_\alpha$, in each case.

We shall review the broad definition of CA that appears in \cite[Sec.~1.4]{CSC10}. Let $G$ be a group and $A$ a set. Denote by $A^G$ the \emph{configuration space}, i.e. the set of all functions of the form $x:G \to A$. The group $G$ acts on the configuration space $A^G$ as follows: for each $g \in G$ and $x \in A^G$, the configuration $x \cdot g \in A^G$ is defined by 
\[ (h)x \cdot g := (hg^{-1})x, \quad \forall h \in G. \] 

The following definitions shall be usefuls in the rest of this paper.

\begin{definition} Let $G$ be a group and $A$ a set.
\begin{enumerate}
\item For any $x \in A^G$, the \emph{$G$-orbit} of $x$ in $A^G$ is $xG := \{ x \cdot g : g \in G  \}$.
\item For any $x \in A^G$, the \emph{stabiliser} of $x$ in $G$ is $G_x := \{ g \in G : x \cdot g = x \}$.
 \item A \emph{subshift} of $A^G$ is a subset $X \subseteq A^G$ that is \emph{$G$-invariant}, i.e. for all $x \in X$, $g \in G$, we have $x \cdot g \in X$, and closed in the prodiscrete topology of $A^G$. 
\end{enumerate}
\end{definition}

\begin{definition} \label{def:ca}
Let $G$ be a group and $A$ a set. A \emph{cellular automaton} over $A^G$ is a transformation $\tau : A^G \to A^G$ satisfying the following: there is a finite subset $S \subseteq G$, called a \emph{memory set} of $\tau$, and a \emph{local function} $\mu : A^S \to A$ such that, for all $x \in A^G, g \in G$,
\[ (g)(x)\tau = (( x \cdot g^{-1}  )\vert_{S}) \mu,   \]
where $( x \cdot g^{-1} )\vert_{S}$ is the restriction to $S$ of $x \cdot g^{-1} \in A^G$ .
\end{definition} 

We emphasise that we apply functions on the right, while \cite{CSC10} applies functions on the left.   

A transformation $\tau : A^G \to A^G$ is \emph{$G$-equivariant} if, for all $x \in A^G$, $g \in G$,
\[ (x \cdot g) \tau = ( (x) \tau ) \cdot g .\] 
Any cellular automaton is $G$-equivariant, but the converse is not true in general. A generalisation of Curtis-Hedlund Theorem (see \cite[Theorem 1.8.1]{CSC10}) establishes that, when $A$ is finite, $\tau : A^G \to A^G$ is a CA if and only if $\tau$ is $G$-equivariant and continuous in the prodiscrete topology of $A^G$; in particular, when $G$ and $A$ are both finite, $G$-equivariance completely characterises CA over $A^G$. 

A configuration $x \in A^G$ is called \emph{constant} if $(g)x = k$, for a fixed $k \in A$, for all $g \in G$. In such case, we denote $x$ by $\mathbf{k} \in A^G$. 

\begin{remark}\label{constant}
It follows by $G$-equivariance that any $\tau \in \CA(G;A)$ maps constant configurations to constant configurations.  
\end{remark}

Recall from Section \ref{sec:introduction} that $\tau \in \CA(G;A)$ is \emph{invertible} if there exists $\sigma \in \CA(G;A)$ such that $\tau \sigma = \sigma \tau = \id$, and that $\tau \in \CA(G;A)$ is \emph{vN-regular} if there exists $\sigma \in \CA(G; A)$ such that $\tau \sigma \tau = \tau$. Clearly, every invertible CA is vN-regular. We now present some examples of CA that are vN-regular but not invertible.

\begin{example}
If $\tau \in \CA(G;A)$ is idempotent (i.e. $\tau^2 = \tau$), then it is vN-regular as $\tau \tau \tau = \tau$. 
\end{example}

\begin{example}
Let $G$ be any nontrivial group and $A$ any set with at least two elements. Let $\sigma \in \CA(G;A)$ be a CA with memory set $\{s \} \subseteq G$ and local function $\mu : A \to A$. As $\Tran(A)$ is vN-regular, there exists $\mu^\prime : A \to A$ such that $\mu \mu^\prime \mu = \mu$. If $\sigma^\prime : A^G \to A^G$ is the CA with memory set $\{ s^{-1} \}$ and local function $\mu^\prime$, then $\sigma \sigma^\prime \sigma = \sigma$. Hence $\sigma$ is vN-regular. In particular, when $\mu$ is not bijective, this gives an example of a non-invertible CA $\sigma$ that is vN-regular. 
\end{example}

\begin{example}\label{ex:rules128-254}
Suppose that $A = \{0,1, \dots, q-1\}$, with $q \geq 2$. Consider $\tau_1, \tau_2 \in \CA(\mathbb{Z};A)$ with memory set $S:=\{-1,0,1 \}$ and local functions
\[ (x)\mu_1 = \min \{  (-1)x, (0)x, (1)x \} \text{ and } \ (x)\mu_2 = \max \{ (-1)x, (0)x ,  (1)x \}, \]
respectively, for all $x \in A^S$. In particular, when $q=2$, $\tau_1$ and $\tau_2$ are the elementary CA known as Rules 128 and 254, respectively.
Clearly, $\tau_1$ and $\tau_2$ are not invertible, but we show that they are generalised inverses of each other, i.e.  $\tau_1 \tau_2 \tau_1 = \tau_1$ and $\tau_2 \tau_1 \tau_2 = \tau_2$, so they are both vN-regular. We prove only the first of the previous identities, as the second one is symmetrical. 

Consider 
\[ x \in A^\mathbb{Z}, \ \  y := (x)\tau_1, \ \  z:= (y)\tau_2, \ \text{ and } a := (z)\tau_1. \]
We want to show that $y = a$. By equivariance, it is enough to show that $(0)y=(0)a$. For $\epsilon \in \{ -1,0,1 \}$, we have
\[ (\epsilon )y = \min\{ (\epsilon - 1) x, (\epsilon)x, (\epsilon + 1)x \} \leq (0)x.\]
Hence,
\[(0)z = \max\{ (-1)y, (0)y , (1)y \} \leq (0)x. \]
Similarly $(-1)z \leq (-1)x$ and $(1)z \leq (1)x$, so
\[ (0)a = \min\{ (-1)z, (0)z , (1)z  \} \le (0)y = \min \{ (-1)x, (0)x, (1)x \}. \]
Conversely, we have $(-1)z, (0)z, (1)z \ge (0)y$, so $(0)a \ge (0)y$.
\end{example}

The following lemma gives an equivalent definition of vN-regular CA. Note that this result still holds if we replace $\CA(G;A)$ with any monoid of transformations. 

\begin{lemma}\label{le-vN-regular}
Let $G$ be a group and $A$ a set. Then, $\tau \in \CA(G;A)$ is vN-regular if and only if there exists $\sigma \in \CA(G;A)$ such that for every $y \in (A^G) \tau$ there is $\hat{y} \in A^G$ with $(\hat{y})\tau = y$ and $(y)\sigma = \hat{y}$.
\end{lemma}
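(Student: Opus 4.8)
The statement is a straightforward reformulation of vN-regularity, so the plan is to prove both implications directly by unravelling definitions, with the only care being that $\sigma$ is only constrained on the image $(A^G)\tau$ (and may behave arbitrarily elsewhere).

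For the forward direction, suppose $\tau$ is vN-regular, so there is $\sigma \in \CA(G;A)$ with $\tau\sigma\tau = \tau$. Given $y \in (A^G)\tau$, pick any $w \in A^G$ with $(w)\tau = y$ and set $\hat{y} := (y)\sigma = (w)\tau\sigma$. Then $(\hat{y})\tau = (w)\tau\sigma\tau = (w)\tau = y$, and by construction $(y)\sigma = \hat{y}$, which is exactly what is required. Note this direction uses the \emph{same} $\sigma$ as the witness.

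For the converse, suppose $\sigma \in \CA(G;A)$ has the stated property. I must check $\tau\sigma\tau = \tau$ as functions on $A^G$, i.e. $(x)\tau\sigma\tau = (x)\tau$ for all $x \in A^G$. Fix $x$ and put $y := (x)\tau \in (A^G)\tau$. By hypothesis there is $\hat{y}$ with $(\hat{y})\tau = y$ and $(y)\sigma = \hat{y}$; hence $(x)\tau\sigma\tau = (y)\sigma\tau = (\hat{y})\tau = y = (x)\tau$. Since $x$ was arbitrary, $\tau\sigma\tau = \tau$, so $\tau$ is vN-regular.

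There is essentially no obstacle here: the argument is a two-line chase in each direction, and the only subtlety worth a remark is that in the converse we never need $\sigma$ to be defined consistently off the image of $\tau$ — we only evaluate $\sigma$ at points of $(A^G)\tau$ — which is precisely why the lemma is stated with the existential "for every $y \in (A^G)\tau$ there is $\hat{y}$". One should also note that nothing in the proof uses that $\tau,\sigma$ are cellular automata rather than arbitrary transformations of $A^G$, which justifies the remark preceding the lemma that it holds in any transformation monoid.
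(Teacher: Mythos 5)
Your proof is correct and follows essentially the same two-line unravelling of definitions as the paper's own proof, in both directions. Nothing to add.
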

\begin{proof}
If $\tau \in \CA(G;A)$ is vN-regular, there exists $\sigma \in \CA(G;A)$ such that $\tau \sigma \tau = \tau$. Let $x \in A^G$ be such that $(x)\tau = y$ (which exists because $y \in (A^G) \tau$) and define $\hat{y}:= (y)\sigma$. Now,
\[ (\hat{y})\tau = (y)\sigma\tau = (x)\tau\sigma\tau = (x)\tau = y. \]
Conversely, assume there exists $\sigma \in \CA(G;A)$ satisfying the statement of the lemma. Then, for any $x \in A^G$ with $y:=(x)\tau$ we have
\[ (x) \tau \sigma \tau = (y) \sigma \tau = (\hat{y})\tau = y = (x)\tau. \]
Therefore, $\tau$ is vN-regular. 
\end{proof}

The following is a powerful tool to show that a CA is not vN-regular.

\begin{theorem}\label{test-non-regular}
Let $G$ be a group, $A$ a set, and $\tau \in \CA(G;A)$. Suppose there exists $x \in A^{G}$ such that
\[ x \in  (A^{G})\tau, \text{  but  } x \neq (y)\tau \text{ for all } y\in A^G \text{ such that } G_x = G_y. \]
Then, $\tau$ is not vN-regular. 
\end{theorem}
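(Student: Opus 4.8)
The plan is to argue by contraposition: assume $\tau$ is vN-regular and derive that every $x$ in the image of $\tau$ is hit by some $y$ with $G_x = G_y$, contradicting the hypothesis. So suppose there is $\sigma \in \CA(G;A)$ with $\tau\sigma\tau = \tau$, and let $x \in (A^G)\tau$ be arbitrary. By Lemma \ref{le-vN-regular}, the configuration $\hat{x} := (x)\sigma$ satisfies $(\hat{x})\tau = x$. The natural candidate for the required preimage $y$ of $x$ is precisely this $\hat{x}$, so the whole proof reduces to showing $G_x = G_{\hat{x}}$.

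The key observation is that $\sigma$, being a cellular automaton, is $G$-equivariant, and $G$-equivariant maps do not shrink stabilisers: if $g \in G_x$, i.e. $x \cdot g = x$, then $\hat{x} \cdot g = ((x)\sigma)\cdot g = (x \cdot g)\sigma = (x)\sigma = \hat{x}$, so $g \in G_{\hat{x}}$. This gives the inclusion $G_x \subseteq G_{\hat{x}}$. For the reverse inclusion, I apply the same argument to $\tau$: since $\tau$ is also $G$-equivariant and $(\hat{x})\tau = x$, any $g \in G_{\hat{x}}$ satisfies $x \cdot g = ((\hat{x})\tau)\cdot g = (\hat{x}\cdot g)\tau = (\hat{x})\tau = x$, so $g \in G_x$. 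Hence $G_x = G_{\hat{x}}$, and $y := \hat{x}$ is a preimage of $x$ under $\tau$ with $G_y = G_x$. This holds for every $x \in (A^G)\tau$, so the condition in the statement cannot be satisfied; equivalently, if that condition does hold for some $x$, then $\tau$ is not vN-regular.

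I don't anticipate a serious obstacle here: the entire content is the elementary fact that $G$-equivariant maps can only enlarge stabilisers, applied once to $\sigma$ and once to $\tau$. The only point requiring a little care is making sure we genuinely get a $\sigma$ for which Lemma \ref{le-vN-regular} applies and that $\hat{x}$ is well-defined (it is, as $\hat{x} = (x)\sigma$ makes sense for any $x$, and the lemma guarantees $(\hat{x})\tau = x$ when $x$ is in the image). One could alternatively phrase the proof without invoking Lemma \ref{le-vN-regular} directly: pick any $x_0$ with $(x_0)\tau = x$, set $\hat{x} := (x)\sigma = (x_0)\tau\sigma$, and note $(\hat{x})\tau = (x_0)\tau\sigma\tau = (x_0)\tau = x$; but citing the lemma is cleaner.
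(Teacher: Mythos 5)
Your proposal is correct and follows essentially the same route as the paper's proof: both invoke Lemma \ref{le-vN-regular} to obtain the preimage $\hat{x}=(x)\sigma$ and then use the fact that $G$-equivariant maps can only enlarge stabilisers, applied once to $\sigma$ and once to $\tau$, to conclude $G_x = G_{\hat{x}}$. The only cosmetic difference is that you phrase it as contraposition while the paper argues by contradiction.
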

\begin{proof}
First, note that for any $\sigma \in \CA(G;A)$ and $z \in A^G$ we have $G_z \leq G_{(z)\sigma}$. Indeed, for any $g \in G_z$ we have $(z)\sigma \cdot g = (z \cdot g) \sigma = (z) \sigma$, so $g \in G_{(z)\sigma}$.

For a contradiction, suppose that $\tau$ is vN-regular. By Lemma \ref{le-vN-regular}, there exists $\sigma \in \CA(G;A)$ mapping $x$ to one of its preimages under $\tau$: say $z \in A^G$ satisfies $(z)\tau = x$ and $(x)\sigma = z$. By the above paragraph, $G_z \leq G_{(z)\tau} = G_{x}$ and $G_{x}\leq G_{(x)\sigma} = G_{z} $, so $G_z  = G_x$. This contradicts the hypothesis. Thus, $\tau$ is not vN-regular.
\end{proof}

\begin{corollary}\label{cor-vN-regular}
Let $G$ be a group and $A$ a set. Let $\tau \in \CA(G;A)$, and suppose there is a constant configuration $\mathbf{k} \in (A^G)\tau$ such that $\mathbf{k} \neq (\mathbf{s})\tau$ for all constant configurations $\mathbf{s} \in A^G$. Then $\tau$ is not vN-regular.
\end{corollary}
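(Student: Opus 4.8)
The plan is to obtain this as an immediate consequence of Theorem \ref{test-non-regular} applied with $x := \mathbf{k}$. The first step is to compute the stabiliser of a constant configuration: since $\mathbf{k} \cdot g = \mathbf{k}$ for every $g \in G$, we have $G_{\mathbf{k}} = G$. Next I would verify the converse, namely that constant configurations are the only ones with full stabiliser: if $G_y = G$, then for all $g, h \in G$ we have $(h)y = (h)(y \cdot g) = (hg^{-1})y$, and taking $h = g$ forces $(g)y = (e)y$ for all $g$, so $y$ is constant. Hence $\{ y \in A^G : G_y = G_{\mathbf{k}} \}$ is precisely the set of constant configurations.

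With this identification in hand, the hypotheses of Theorem \ref{test-non-regular} are satisfied for $x = \mathbf{k}$: indeed $\mathbf{k} \in (A^G)\tau$ by assumption, and $\mathbf{k} \neq (y)\tau$ for every $y \in A^G$ with $G_y = G_{\mathbf{k}}$, because by the previous step such $y$ are exactly the constant configurations $\mathbf{s}$, and the hypothesis of the corollary states that $\mathbf{k} \neq (\mathbf{s})\tau$ for all of them. Theorem \ref{test-non-regular} then yields directly that $\tau$ is not vN-regular.

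I do not expect any genuine obstacle here; the only point requiring care is the elementary verification that having full stabiliser characterises constant configurations (which is a sharpening, in the direction needed, of Remark \ref{constant}), and ensuring that the quantifier in Theorem \ref{test-non-regular} — over all $y$ with $G_y = G_x$, not merely constant ones — is matched correctly. Since in the case $x = \mathbf{k}$ these are the same configurations, the two quantifiers coincide and the deduction goes through without further work.
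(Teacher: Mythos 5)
Your proposal is correct and follows exactly the paper's argument: the corollary is deduced from Theorem \ref{test-non-regular} together with the observation that $x \in A^G$ is constant if and only if $G_x = G$. Your explicit verification of that equivalence (both directions) is a sound elaboration of what the paper leaves implicit.
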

\begin{proof}
This follows by Theorem \ref{test-non-regular} and the fact that $x \in A^G$ is constant if and only if $G_x = G$.  
\end{proof}

In the following examples we see how Corollary \ref{cor-vN-regular} may be used to show that some well-known CA are not vN-regular.

\begin{example}\label{ex:rule110}
Let $\phi \in \CA(\mathbb{Z}; \{0,1 \})$ be the Rule 110 elementary CA, and consider the constant configuration $\mathbf{1}$. Define $x := \dots 10101010 \dots \in \{0,1 \}^{\mathbb{Z}}$, and note that $(x)\phi = \mathbf{1}$. Since $(\mathbf{1})\phi = \mathbf{0}$ and $(\mathbf{0})\phi= \mathbf{0}$, Corollary \ref{cor-vN-regular} implies that $\phi$ is not vN-regular. 
\end{example}

\begin{example}
Let $\tau \in \CA(\mathbb{Z}^2; \{0,1 \})$ be Conway's Game of Life, and consider the constant configuration $\mathbf{1}$ (all cells alive). By \cite[Exercise 1.7.]{CSC10}, $\mathbf{1}$ is in the image of $\tau$; since $(\mathbf{1})\tau = \mathbf{0}$ (all cells die from overpopulation) and $(\mathbf{0})\tau = \mathbf{0}$, Corollary \ref{cor-vN-regular} implies that $\tau$ is not vN-regular.
\end{example}

The following theorem applies to CA over arbitrary groups and sets, and it shows that, except for the trivial cases, $\CA(G;A)$ always contains elements that are not vN-regular.

\begin{theorem} \label{th:vN-regular}
Let $G$ be a group and $A$ a set. The monoid $\CA(G;A)$ is vN-regular if and only if $\vert G \vert = 1$ or $\vert A \vert = 1$.
\end{theorem}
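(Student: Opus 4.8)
The plan is to prove the two directions separately. The easy direction is that $|G|=1$ or $|A|=1$ forces $\CA(G;A)$ to be vN-regular: if $|A|=1$ then $A^G$ is a singleton and $\CA(G;A)$ is the trivial monoid, which is vacuously vN-regular; if $|G|=1$ then $A^G \cong A$ and every $G$-equivariant map is an arbitrary transformation of $A$, so $\CA(G;A) \cong \Tran(A)$, which is a full transformation monoid and hence vN-regular by a standard fact (every transformation of a set has a quasi-inverse). So the whole content is the converse: assuming $|G| \geq 2$ and $|A| \geq 2$, I must exhibit a single $\tau \in \CA(G;A)$ that is not vN-regular.

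The natural strategy is to build $\tau$ so that Corollary \ref{cor-vN-regular} applies: I want a constant configuration $\mathbf{k}$ that lies in the image of $\tau$ but is not the image of any constant configuration. Fix two distinct elements, say $0,1 \in A$, and fix a non-identity element $s \in G$. Define $\tau$ by a memory set $S = \{e, s\}$ (where $e$ is the identity of $G$) and a local function $\mu : A^S \to A$ that reads the pair of values $((e)x, (s)x)$ and outputs, roughly, $1$ if the two values agree and $0$ if they disagree — more precisely, pick $\mu$ so that $\mu(k,k)=0$ for every $k \in A$ (so every constant configuration maps to $\mathbf 0$) but $\mu$ takes the value $0$ somewhere off the diagonal as well, while also arranging some non-constant configuration to map to $\mathbf 0$ — wait, I actually want $\mathbf 0$ (not $\mathbf 1$) to be the "bad" constant. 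Let me instead take $\mu(k,k)=1$ for all $k$ if $|A|=2$, or more robustly: let $\mu(a,b) = 0$ if $a \neq b$ and $\mu(a,b)=a$ if $a = b$. Then every constant configuration $\mathbf k$ satisfies $(\mathbf k)\tau = \mathbf k$, so constant configurations map onto all of the constant configurations, and this particular $\mu$ won't work. The right choice is: $\mu(a,b) = 0$ if $a = b$, and $\mu(a,b) = 1$ if $a \neq b$ (using two fixed distinct values $0,1$; everything off-diagonal, including $a\neq b$ with other values, maps to $1$). Then $(\mathbf k)\tau = \mathbf 0$ for every constant $\mathbf k$, so no constant configuration maps to $\mathbf 1$. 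It remains to produce a (necessarily non-constant) configuration $x$ with $(x)\tau = \mathbf 1$: I need $(g)x \neq (gs^{-1})x$ for every $g \in G$, i.e. $x$ disagrees with its $s$-shift everywhere.

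The only real obstacle is this last combinatorial point: constructing $x \in A^G$ with $(h)x \neq (hs)x$ for all $h \in G$ (equivalently $x(g) \ne x(gs^{-1})$ for all $g$). This amounts to properly $2$-colouring the Cayley graph of $G$ with the single generator $s$ — whose connected components are the left cosets of $\langle s \rangle$, each of which is either a bi-infinite line (if $s$ has infinite order) or a cycle of length $o(s)$. A line is always $2$-colourable; a cycle of length $n$ is $2$-colourable iff $n$ is even. Hence if $o(s)$ is infinite or even, we are done directly. If every non-identity element of $G$ has odd order, this particular $\mu$ needs adjustment: in that case, since $|A| \geq 2$ I can take a third behaviour using a value distinct from $0,1$ when available, or — cleanest — note that if $|A| \geq 2$, I can instead take the memory set $\{e,s,s^2\}$ and use that each coset of $\langle s\rangle$ is a cycle of odd length $n \geq 3$, which is properly colourable with $3$ "colours" along consecutive triples; if $|A| \geq 3$ this gives the configuration directly, and if $|A| = 2$ one observes $G$ has no element of order $2$ only if... actually the slick fix for $|A|=2$: an odd cycle $C_n$ still admits a map $x$ with $x(i) \neq x(i+1)$ on all but exactly one edge; choose $s$ with $o(s)$ minimal and handle the defect — but the truly clean route is to just allow $x$ to fail to disagree on a single coset-representative edge per component and instead output $\mathbf 1$ only where forced, which breaks the argument. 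I therefore expect the writeup to split into the case "$G$ has an element of infinite or even order" (handled as above) and the case "$G$ is a nontrivial group of odd exponent" (handled by enlarging the memory set to $\{e,s\} \cup \{e, s^{-1}\}$-type constructions using that odd cycles have chromatic number $3$ and choosing $\mu$ valued in a $3$-element subset of $A$, which exists once we note the $|A|=2$ subcase reduces to $G$ having even order elements — false, e.g. $\mathbb Z_3$ — so the honest fix for $A=\{0,1\}$, $G=\mathbb Z_3$ is an explicit ad hoc $\tau$). This case analysis, and getting the odd-order case airtight with $|A|=2$, is the main thing to be careful about.
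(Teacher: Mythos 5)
Your overall strategy is the paper's strategy: handle the trivial cases via $\CA(G;A)\cong\Tran(A)$ or the trivial monoid, and for $|G|,|A|\ge 2$ build one CA to which Corollary \ref{cor-vN-regular} applies, i.e.\ a $\tau$ sending every constant configuration to a single constant $\mathbf{0}$ while some non-constant configuration also maps to a constant not hit by any constant. But your construction has a genuine gap that you yourself flag and never close: with memory set $\{e,s\}$ and the rule ``output $1$ iff the two values disagree,'' you need a configuration $x$ with $(h)x\neq (hs)x$ for all $h$, i.e.\ a proper $2$-colouring of the cosets of $\langle s\rangle$. This fails exactly when every non-identity element of $G$ has finite odd order and $|A|=2$ (already for $G=\mathbb{Z}_3$, $A=\{0,1\}$), and your discussion of that case ends with ``an explicit ad hoc $\tau$'' that is never exhibited. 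As written, the converse direction is therefore not proved for an infinite family of groups, so the proof is incomplete.

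The case split is avoidable, which is why the paper does not need it. The paper uses the symmetric memory set $S=\{e,g,g^{-1}\}$ and local rules built on the predicate ``all three values agree'' (composing two such CA, $\tau=\tau_2\tau_1$, so that $\mathbf{0}\mapsto\mathbf{1}$ and every other constant is fixed, while an alternating configuration supported on $\langle g\rangle$ lands on $\mathbf{0}$). The point is that a three-point window only needs to be \emph{non-monochromatic}, not properly $2$-coloured: an alternating $0,1,0,1,\dots$ pattern along $\langle g\rangle$ violates ``all three agree'' at every cell whatever the order of $g$, even or odd. You came within reach of this when you proposed enlarging the memory set to $\{e,s,s^2\}$, but you then demanded a proper $3$-colouring of the odd cycle; the correct observation is that two symbols already suffice to make every length-$3$ window non-constant on a cycle of length $\ge 3$ (e.g.\ $0,0,1$ on $\mathbb{Z}_3$). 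Either adopt that fix and write out the resulting single-rule argument for all cases, or follow the paper's two-rule composite; as it stands the proposal does not establish the theorem.
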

\begin{proof}
If $\vert G \vert = 1$ or $\vert A \vert = 1$, then $\CA(G;A) = \Tran(A)$ or $\CA(G;A)$ is the trivial monoid with one element, respectively. In both cases, $\CA(G;A)$ is vN-regular (see \cite[Exercise 1.9.1]{CP61}).

Assume that $\vert G \vert \geq 2$ and $\vert A \vert \geq 2$. Suppose that $\{ 0,1\} \subseteq A$. Let $S := \{e,g,g^{-1}\} \subseteq G$, where $e$ is the identity of $G$ and $e \neq g \in G$ (we do not require $g \neq g^{-1}$). For $i =1,2$, let $\tau_i \in \CA(G;A)$ be the cellular automaton defined by the local function $\mu_i : A^S \to A$ defined by
\begin{align*}
(x)\mu_1   &:=  \begin{cases}
(e)x & \text{if } (e)x = (g)x = (g^{-1})x , \\
0 & \text{otherwise};
\end{cases}  \\
(x) \mu_2 &:= \begin{cases}
1 & \text{if } (e)x = (g)x = (g^{-1})x= 0 , \\
(e)x & \text{otherwise},
\end{cases} 
\end{align*}
for any $x \in A^S$. We shall show that $\tau := \tau_2 \tau_1 \in \CA(G;A)$ is not vN-regular.

Consider the constant configurations $\mathbf{0}, \mathbf{1} \in A^G$. Let $z \in A^G$ be defined by
\[ (h)z := \begin{cases}
m \mod(2) & \text{if } h=g^m, m \in \mathbb{N} \text{ minimal}, \\
0 & \text{ otherwise}. 
\end{cases} \]   

\begin{figure}[h]
\centering
\begin{tikzpicture}[vertex/.style={circle, draw, fill=none, inner sep=0.2cm}]
    \vertex{1}{1}{2}    \node at (1,3.4) {$z$};  
    \vertex{2}{3}{1}    \node at (3,1.7) {$\mathbf{k}$};   
   \vertex{3}{1}{1}    \node at (1,1.7) {$\mathbf{0}$};  
   \vertex{4}{1}{0}   \node at (1,0) {$\mathbf{1}$};   

	\path[->,every loop/.style={min distance=6mm,in=65,out=120,looseness=5}] (1) edge [loop above] node[pos=.5,above] {\small{$\tau_2$}} () ;
	\path[->,every loop/.style={min distance=6mm,in=65,out=120,looseness=5}] (2) edge [loop above] node[pos=.4,above] {\small{\ \ $\tau_1,\tau_2$}} () ;
	\path[->,every loop/.style={min distance=6mm,in=210,out=155,looseness=5}] (3) edge [loop above] node[pos=.5,left] {\small{$\tau_1$}} () ;
	\path[->,every loop/.style={min distance=6mm,in=-30,out=25,looseness=5}] (4) edge [loop above] node[pos=.5,right] {\small{$\tau_1, \tau_2$}} () ;

	\draw[->] (1) -- (3) node[pos=.5,right] {\small{$\tau_1$}}; 
	\draw[->] (3) -- (4) node[pos=.5,right] {\small{$\tau_2$}};

   \end{tikzpicture} 
\caption{Images of $\tau_1$ and $\tau_2$.}
\label{Fig1}
   \end{figure}
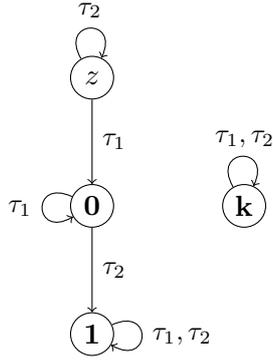

Figure \ref{Fig1} illustrates the images $z$, $\textbf{0}$, $\textbf{1}$, and $\mathbf{k} \neq \textbf{0}, \textbf{1}$ (in case it exists) under $\tau_1$ and $\tau_2$. Clearly,
\[ (\mathbf{0})\tau = ( \mathbf{0})\tau_2 \tau_1 = (\mathbf{1})\tau_1 = \mathbf{1}. \]
In fact, 
 \[ (\mathbf{k})\tau = \begin{cases}
 \mathbf{1} & \text{if } \mathbf{k}=\mathbf{0},  \\
 \mathbf{k} & \text{otherwise}. 
  \end{cases} \] 
Furthermore,  
\[ (z)\tau = (z) \tau_2 \tau_1 = (z)\tau_1 = \textbf{0}. \]
Hence, $\mathbf{0}$ is a constant configuration in the image of $\tau$ but with no preimage among the constant configurations. By Corollary \ref{cor-vN-regular}, $\tau$ is not vN-regular. 
\end{proof}

Now that we know that $\CA(G;A)$ always contains some elements that are vN-regular and some that are not vN-regular (when $\vert G \vert \geq 2$ and $\vert A \vert \geq 2$), an interesting problem is to find a criterion that describes all vN-regular CA. In the following sections, we study this problem in three particular cases: the elementary, the finite and the linear cases.


\section{Elementary cellular automata} \label{sec-elementary}

Throughout this section, let $A = \{ 0,1 \}$. An elementary cellular automaton is an element $\tau \in \CA(\mathbb{Z}, A)$ with memory set $S=\{ -1, 0 ,1 \}$. These are labeled as `Rule $M$', where $M$ is a number from $0$ to $255$. In each case, the local rule $\mu_M : A^S \to A$ is determined as follows: let $M_1 \dots M_8$ be the binary representation of $M$ and write the elements of $A^S$ in lexicographical descending order, i.e. $111, 110, \dots, 000$; then, the image of the $i$-th element of $A^S$ under $\mu_M$ is $M_i$. 

In Example \ref{ex:rules128-254}, we showed that Rules 128 and 254 are both vN-regular, while, in Example \ref{ex:rule110}, we saw that Rule 110 is not vN-regular. A natural goal is to classify which ones of the $256$ elementary cellular automata are vN-regular. In other words, we ask the following question:

\begin{question}\label{question}
For which elementary cellular automata $\tau \in \CA(\mathbb{Z};A)$ there exists $\sigma  \in \CA(\mathbb{Z};A)$ (not necessarily elementary) such that $\tau \sigma \tau = \tau$?
\end{question}    

In order to achieve this goal, it is convenient to define some equivalence relations between elementary CA that are preserved by vN-regularity. In general, $\tau_1, \tau_2 \in \CA(G;A)$ are said to be \emph{conjugate} if there exists $\phi \in \ICA(G;A)$ such that $\tau_2 = \phi^{-1} \tau_1 \phi$. It is clear that in this situation, $\tau_1$ is vN-regular if and only if $\tau_2$ is vN-regular: indeed, if $\sigma \in \CA(G;A)$ is such that $\tau_1 \sigma \tau_1 = \tau_1$, then
\[ \tau_2 = \phi^{-1} \tau_1 \phi = \left( \phi^{-1} \tau_1 \phi \right) \left(\phi^{-1} \sigma \phi \right) \left(\phi^{-1} \tau_1 \phi \right) = \tau_2 \sigma^\prime \tau_2,     \]
where $\sigma^\prime = \phi^{-1} \sigma \phi \in \CA(G;A)$. 

There is another symmetry among elementary CA known as the \emph{mirrored rule}. In order to describe this, for any $x \in A^\mathbb{Z}$, denote by $x^{\rev}$ the reflection of $x$ through $0$; in other words, $(k)x^\rev = (-k) x$, for all $k \in \mathbb{Z}$. Now, for any $\tau \in \CA(\mathbb{Z};A)$, define $\tau^\star : A^\mathbb{Z} \to A^\mathbb{Z}$ by
\[ (x) \tau^\star = ((x^\rev)\tau)^\rev, \ \forall x \in A^\mathbb{Z}. \]

\begin{proposition}
The map $\tau^\star: A^\mathbb{Z} \to A^\mathbb{Z}$ is a cellular automaton. Moreover, if $\tau \in \CA(\mathbb{Z};A)$ is elementary, then $\tau^\star$ is elementary.  
\end{proposition}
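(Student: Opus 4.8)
The plan is to verify directly that $\tau^\star$ is $\mathbb{Z}$-equivariant and continuous in the prodiscrete topology, and then invoke the Curtis--Hedlund theorem (stated in the excerpt) since $A$ is finite; alternatively, I would exhibit an explicit memory set and local function. I prefer the explicit route, as it simultaneously handles the ``moreover'' claim about elementary rules. The key observation is that reflection $x \mapsto x^\rev$ is itself an involutive bijection of $A^\mathbb{Z}$, so $\tau^\star$ is the composite of $\rev$, $\tau$, and $\rev$ again; since $\tau$ is a CA with memory set $S$ and local function $\mu$, I just need to track how $\rev$ transforms the memory set. Concretely, I would set $S^\rev := \{ -s : s \in S \}$ and define $\mu^\star : A^{S^\rev} \to A$ by $(w)\mu^\star := (w^\rev)\mu$, where $w^\rev \in A^S$ is given by $(s)w^\rev := (-s)w$; this makes sense because $s \in S \iff -s \in S^\rev$.

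The core computation is to check, for all $x \in A^\mathbb{Z}$ and $g \in \mathbb{Z}$, that $(g)(x)\tau^\star = ((x \cdot (-g))\vert_{S^\rev})\mu^\star$. Unwinding the left side: $(g)(x)\tau^\star = (g)((x^\rev)\tau)^\rev = (-g)(x^\rev)\tau = ((x^\rev \cdot g)\vert_S)\mu$ by the definition of the CA $\tau$ applied at the point $-g$. So it remains to identify $(x^\rev \cdot g)\vert_S$ with $((x \cdot (-g))\vert_{S^\rev})^\rev$. For $s \in S$ one computes $(s)(x^\rev \cdot g) = (sg^{-1})x^\rev = (-sg^{-1})x = (-s \cdot (-g)^{-1})x = (-s)(x \cdot (-g))$, writing the group additively, and $-s \in S^\rev$; this is exactly the reflection of $(x \cdot (-g))\vert_{S^\rev}$. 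Plugging this into the definition of $\mu^\star$ closes the loop. Thus $\tau^\star$ is a CA with memory set $S^\rev$.

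For the ``moreover'' part: if $\tau$ is elementary, then $S = \{-1,0,1\}$, hence $S^\rev = \{-1,0,1\} = S$, so $\tau^\star$ has the same memory set and is therefore elementary (one can further note that its rule number is obtained from $M$ by swapping the roles of the patterns $100 \leftrightarrow 001$ and $110 \leftrightarrow 011$ in the truth table, though this explicit description is not needed for the statement). I expect the main obstacle to be purely bookkeeping: keeping the sign conventions straight between the left-action $x \cdot g$, the reflection $x^\rev$, and the convention that functions act on the right. There is no conceptual difficulty — the only subtlety is that reflection conjugates a translation by $g$ into a translation by $-g$, which is why $S$ gets replaced by $S^\rev$ and why the argument works cleanly. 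A shorter, less computational alternative would be to observe that $\rev \in \ICA(\mathbb{Z}; A)$ is itself an invertible CA (it is $\mathbb{Z}$-equivariant up to the automorphism $g \mapsto -g$ of $\mathbb{Z}$ — more precisely it intertwines the actions), so $\tau^\star = \rev \circ \tau \circ \rev$ is a composite of CA; but since $\rev$ is not quite a CA in the strict $G$-equivariant sense (it reverses the group action rather than commuting with it), I would keep the explicit memory-set argument as the primary proof to avoid any ambiguity.
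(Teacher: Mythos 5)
Your proof is correct, but it takes a different route from the paper's. The paper verifies the two hypotheses of the Curtis--Hedlund theorem: it shows $\rev$ is continuous (its composition with the projection to $k$ is the projection to $-k$), hence $\tau^\star$ is a composite of continuous maps, and then checks $\mathbb{Z}$-equivariance via the identity $(x\cdot r)^\rev = x^\rev\cdot(-r)$; the ``moreover'' claim is then dispatched with ``it is easy to see that $\tau^\star$ has memory set contained in $\{-1,0,1\}$.'' You instead exhibit the memory set $S^\rev=\{-s:s\in S\}$ and local function $(w)\mu^\star=(w^\rev)\mu$ explicitly and verify the defining equation $(g)(x)\tau^\star=((x\cdot(-g))\vert_{S^\rev})\mu^\star$ directly; I checked your sign bookkeeping ($(s)(x^\rev\cdot g)=(-s)(x\cdot(-g))$) and it is right. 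Your approach buys two things: it does not invoke Curtis--Hedlund, so it works verbatim for an arbitrary (possibly infinite) alphabet $A$ rather than relying on $|A|<\infty$, and it makes the ``moreover'' part a one-line consequence ($S^\rev=S$ when $S=\{-1,0,1\}$) instead of an assertion. The paper's approach is shorter to write and reuses machinery already quoted in the text. Your closing remark --- that $\rev$ is not itself a CA because it intertwines rather than commutes with the shift action, so one should not shortcut by calling $\tau^\star$ a composite of CA --- is a correct and worthwhile caution.
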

\begin{proof}
The function $\rev :  A^\mathbb{Z} \to  A^\mathbb{Z}$ is continuous as its composition with the projection to $k \in \mathbb{Z}$ is equal to the projection to $-k \in \mathbb{Z}$ (which is continuous in the prodiscrete topology of $A^{\mathbb{Z}}$). Hence $\tau^\star$, being the composition of continuous functions, is also continuous. Observe that, for all $k, r \in \mathbb{Z}$, $x \in A^\mathbb{Z}$, we have
\[ (k) (x \cdot r)^\rev = (-k) (x \cdot r) = (-k-r) x = (k + r) x^\rev = (k)x^\rev \cdot (-r).   \]
Thus,
\[ (x \cdot r ) \tau^\star = ((x\cdot r)^\rev)\tau)^\rev = ((x^\rev)\tau)^\rev \cdot (-(-r)) = (x)\tau^\star \cdot r.     \]    
This proves that $\tau^\star$ is $\mathbb{Z}$-equivariant, and, by Curtis-Hedlund theorem, $\tau^\star \in  \CA(\mathbb{Z};A)$.

If $\tau$ is elementary, it is easy to see that $\tau^\star$ has memory set contained in $\{ -1, 0 ,1\}$, so it is also elementary.
\end{proof}

\begin{proposition}
Let $\tau \in \CA(\mathbb{Z};A)$. Then:
\begin{enumerate}
\item $(\tau^\star)^\star = \tau$.
\item $(\tau \circ \sigma)^\star = \tau^\star \circ \sigma^\star$, for any $\sigma \in \CA(\mathbb{Z};A)$.
\item $\tau$ is vN-regular if and only if $\tau^\star$ is vN-regular.
\end{enumerate}
\end{proposition}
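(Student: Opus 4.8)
The plan is to verify the three parts of the proposition by exploiting the fact that $\rev : A^{\mathbb Z} \to A^{\mathbb Z}$ is an involution and that $\tau^\star$ was literally defined as the conjugate-like composition $x \mapsto ((x^\rev)\tau)^\rev$. The previous proposition already established that $\tau^\star \in \CA(\mathbb Z;A)$, so I may freely use that $\rev$, $\tau$, $\tau^\star$ are all legitimate maps and compose them at will.

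For part (1), I would unwind the definition twice. Writing $r := \rev$ as a function on $A^{\mathbb Z}$, the key observation is $r\circ r = \id$, which is immediate from $(k)x^{\rev\rev} = (-k)x^\rev = (k)x$. Then $(x)(\tau^\star)^\star = ((x^\rev)\tau^\star)^\rev = \big(\big(((x^\rev)^\rev)\tau\big)^\rev\big)^\rev = \big((x)\tau\big)^{\rev\rev} = (x)\tau$, using $r\circ r = \id$ twice. So $(\tau^\star)^\star = \tau$. For part (2), I would again just compute: for any $x$, $(x)(\tau\sigma)^\star = ((x^\rev)\tau\sigma)^\rev$, whereas $(x)(\tau^\star\sigma^\star) = ((x)\tau^\star)\sigma^\star = \big(((x)\tau^\star)^\rev \sigma\big)^\rev = \Big(\big(((x^\rev)\tau)^{\rev}\big)^\rev\,\sigma\Big)^\rev = \big((x^\rev)\tau\,\sigma\big)^\rev$, where the middle $\rev\rev$ cancels by part (1)'s lemma. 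The two sides agree, so $(\tau\circ\sigma)^\star = \tau^\star\circ\sigma^\star$. (One should be a little careful with the right-action convention — since functions act on the right, $(x)(\tau\sigma)^\star$ means apply $\tau\sigma$ sandwiched between reversals — but the bookkeeping is the same.)

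For part (3), I would combine (1) and (2). Suppose $\tau$ is vN-regular, so there is $\sigma \in \CA(\mathbb Z;A)$ with $\tau\sigma\tau = \tau$. Apply the $\star$ operation to both sides: by part (2) (used twice, since $\star$ distributes over composition), $(\tau\sigma\tau)^\star = \tau^\star \sigma^\star \tau^\star$, and the left-hand side is $\tau^\star$. Hence $\tau^\star \sigma^\star \tau^\star = \tau^\star$, so $\sigma^\star$ is a weak generalised inverse of $\tau^\star$ and $\tau^\star$ is vN-regular. The converse follows by symmetry using part (1): if $\tau^\star$ is vN-regular, then so is $(\tau^\star)^\star = \tau$.

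I do not anticipate a genuine obstacle here — all three parts are straightforward diagram-chasing once one records the single fact that $\rev$ is a self-inverse bijection of $A^{\mathbb Z}$. The only place to be attentive is consistency with the right-action notation when expanding $(x)(\tau\circ\sigma)^\star$ in part (2); I would write every intermediate configuration explicitly (naming $y := (x^\rev)\tau$, etc.) to avoid sign/order slips, and then parts (1) and (3) are essentially formal consequences. If one wanted, part (3) could alternatively be phrased by noting that $\tau \mapsto \tau^\star$ is an anti-automorphism (or, composing with the order-reversal, an automorphism) of the monoid $\CA(\mathbb Z;A)$, and vN-regularity is preserved by any such map; but the direct argument above is shorter and self-contained.
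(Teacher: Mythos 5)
Your proposal is correct and follows essentially the same route as the paper: parts (1) and (2) are verified by direct pointwise computation using that $\rev$ is an involution, and part (3) is deduced by applying $\star$ to the identity $\tau\sigma\tau=\tau$ and using the involutive property for the converse. The bookkeeping with the right-action convention in part (2) is handled correctly, so nothing further is needed.
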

\begin{proof}
It is clear that, for all $x \in A^{\mathbb{Z}}$,
\[ (x)(\tau^\star)^\star = ((x^\rev) \tau^\star)^\rev = ( (x^{\rev \ \rev}) \tau )^{\rev \ \rev} = (x) \tau.  \]
Now,
\[   (x) \tau^\star \circ \sigma^\star = (((x^\rev)\tau)^\rev) \sigma^\star = (((x^\rev)\tau)^{\rev \ \rev}) \sigma)^\rev = ((x^\rev)\tau \circ \sigma)^\rev = (x)(\tau \circ \sigma)^\star.    \]
The last part follows because $\tau \sigma \tau = \tau$ if and only if $\tau^\star \sigma^\star \tau^\star = \tau^\star$.
\end{proof}

Let $\phi_{51} \in \CA(\mathbb{Z};A)$ be Rule 51. This CA is invertible with $(\phi_{51})^{-1} = \phi_{51}$, its minimal memory set is $\{  0\}$, and can be thought as the transformation of $A^\mathbb{Z}$ that interchanges $0$'s and $1$'s. 

\begin{proposition}
Let $\tau \in \CA(\mathbb{Z};A)$. The following are equivalent:
\begin{enumerate}
\item $\tau$ is elementary and vN-regular.
\item $\phi_{51}  \tau$ is elementary and vN-regular.
\item $\tau \phi_{51}$ is elementary and vN-regular.
\end{enumerate}
\end{proposition}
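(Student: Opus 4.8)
The plan is to reduce everything to two properties of $\phi_{51}$: that it is an involution in $\ICA(\mathbb{Z};A)$ (so $\phi_{51}^{-1} = \phi_{51}$), and that multiplying an arbitrary $\tau \in \CA(\mathbb{Z};A)$ by $\phi_{51}$ on either side changes neither the vN-regularity of $\tau$ nor the property of admitting $\{-1,0,1\}$ as a memory set. Granting these, the equivalence $(1)\Leftrightarrow(2)\Leftrightarrow(3)$ is immediate and symmetric: if $\tau$ is elementary and vN-regular, then so are $\phi_{51}\tau$ and $\tau\phi_{51}$; conversely, since $\phi_{51}(\phi_{51}\tau) = (\phi_{51})^2\tau = \tau$ and $(\tau\phi_{51})\phi_{51} = \tau$, applying the forward statement to $\phi_{51}\tau$ or $\tau\phi_{51}$ in place of $\tau$ recovers $(1)$.

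For the vN-regularity part I would record the elementary monoid fact that a one-sided multiple of a vN-regular element by a unit is again vN-regular: if $u$ is a unit and $a\sigma a = a$, then $(ua)(\sigma u^{-1})(ua) = u(a\sigma a) = ua$ and $(au)(u^{-1}\sigma)(au) = (a\sigma a)u = au$. Taking $u = \phi_{51}$ handles the forward implications, and $\phi_{51}^{-1} = \phi_{51}$ handles the converses.

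For the \emph{elementary} part I would use that $\phi_{51}$ acts cellwise: its local function $\mu_{51}: A^{\{0\}} \to A$ is $a \mapsto 1-a$, so $(g)(x)\phi_{51} = 1-(g)x$ for all $g \in \mathbb{Z}$, $x \in A^{\mathbb{Z}}$. If $\tau$ has memory set $S$ and local function $\mu$, a direct computation gives $(g)(x)(\tau\phi_{51}) = 1 - \bigl((x\cdot g^{-1})\vert_{S}\bigr)\mu$ and $(g)(x)(\phi_{51}\tau) = \bigl(\overline{(x\cdot g^{-1})\vert_{S}}\bigr)\mu$, where $\overline{\,\cdot\,}$ denotes the cellwise bit flip; in both cases the value at $g$ depends on $x$ only through $(x\cdot g^{-1})\vert_{S}$, so $S$ is a memory set of both $\phi_{51}\tau$ and $\tau\phi_{51}$. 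In particular, when $\tau$ is elementary, $\{-1,0,1\}$ is a memory set of each, and the converse directions follow once more from $\phi_{51}^{2} = \id$.

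I expect no genuinely hard step. The only points needing care are: (i) to observe that composition with $\phi_{51}$ does not enlarge the memory set, which is exactly where the facts that $\phi_{51}$ has minimal memory set $\{0\}$ and acts cellwise are used; and (ii) to read \emph{elementary} as ``admits $\{-1,0,1\}$ as a memory set'' rather than ``has minimal memory set precisely $\{-1,0,1\}$'', so that the argument is not derailed by $\phi_{51}\tau$ possibly having a strictly smaller minimal memory set than $\tau$.
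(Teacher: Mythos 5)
Your proposal is correct and follows essentially the same route as the paper: the vN-regularity transfer is the same unit-cancellation computation (using $\phi_{51}^{-1}=\phi_{51}$), and the memory-set preservation, which the paper gets by citing \cite[Proposition 1.4.8]{CSC10} (a memory set for the composite is $\{0\}+S=S$), you instead verify by a direct and equally valid computation. Your explicit remark that ``elementary'' must be read as ``admits $\{-1,0,1\}$ as a memory set'' is a sensible precaution that the paper leaves implicit.
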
 
\begin{proof}
By \cite[Proposition 1.4.8]{CSC10}, a memory set for $\phi_{51} \tau$ is $\{0 \} + S = S$, where $S$ is a memory set for $\tau$. Hence, $\tau$ is elementary if and only if  $\phi_{51} \tau$ is elementary, and the other equivalence is shown similarly. 

Suppose $\tau$ is vN-regular and let $\sigma \in \CA(\mathbb{Z};A)$ be such that $\tau \sigma \tau = \tau$. Then
\[ (\phi_{51}  \tau) (\sigma \phi_{51}) (\phi_{51} \tau ) = \phi_{51} \tau \sigma \tau =\phi_{51} \tau,  \]
so $ \phi_{51} \tau$ is vN-regular. Conversely, if $ \phi_{51} \tau$ is vN-regular, let $\sigma^\prime \in \CA(\mathbb{Z};A)$ be such that $(\phi_{51} \tau) \sigma^\prime (\phi_{51} \tau) = \phi_{51} \tau$. Canceling $\phi_{51}$, we obtain that $\tau (\sigma^\prime \phi_{51})  \tau = \tau$, so $\tau$ is vN-regular. The other equivalence is shown similarly.
\end{proof}

In the literature, two elementary CA $\tau_1$ and $\tau_2$ are said to be equivalent if $\tau_1 = \tau_2$, or $\tau_1 = (\tau_2)^\star$, or $\tau_1 = \phi_{51}\tau_2$, or $\tau_1 = \phi_{51} (\tau_2)^\star$. This defines 88 equivalence classes of elementary cellular automata (see \cite[Table 1]{M13}). Here, we extend this notion of equivalence.

\begin{definition}
Let $\langle \phi_{51} \rangle = \{ \id ,  \phi_{51} \}$. Two elementary CA $\tau_1$ and $\tau_2$ are said to be \emph{equivalent} if $\tau_1 \in \langle \phi_{51} \rangle  \tau_2 \langle \phi_{51} \rangle$, or $\tau_1 \in  \langle \phi_{51} \rangle  (\tau_2)^\star \langle \phi_{51} \rangle$.
\end{definition}  

\begin{table}[!htb]
\setlength{\tabcolsep}{4pt}
\renewcommand{\arraystretch}{1.2}
\centering
\begin{tabular}{|c|c|c|c|c|c|}
\hline \textbf{ Rep. } & \textbf{ Equivalent rules } & \textbf{ Reg.} & \textbf{ Rep. } & \textbf{ Equivalent rules } & \textbf{ Reg.}\\ \hline
0  & 255  & R & 29 & 71, 184, 226  & R   \\ \hline
1 & 127, 128, 254 & R & 30 & 86, 106, 120, 135, 149, 169, 225  & NR \\ \hline
2 & 8, 16, 64,  191, 239, 247, 253   & R & 33 & 123, 132, 222  & - \\ \hline
3 & 17, 63, 119, 136, 192, 238, 252 & R & 35 & 49, 59, 115, 140, 196, 206, 220 & R  \\ \hline
4 & 32, 223, 251 & R & 36 & 219  & NR \\ \hline
5 & 95, 160, 250 & R  & 37  & 91, 164, 218  & NR   \\ \hline
6 & 20, 40, 96, 159, 215, 235, 249 & - & 38 & 44, 52, 100, 155, 203, 211, 217  & NR \\ \hline
7 & 21, 31, 87, 168, 224, 234, 248 & - & 41 & 97, 107, 121, 134, 148, 158, 214     & - \\ \hline
9 & 65, 111, 125, 130, 144, 190, 246 & - & 43 & 113, 142, 212 & R \\ \hline
10 & 80, 175, 245 & R & 45 & 75, 89, 101, 154, 166, 180, 210 & NR \\ \hline
11 & 47, 81, 117, 138, 174, 208, 244 & R & 46 & 116, 139, 209  & NR \\ \hline
12 & 34, 48, 68, 187, 207, 221, 243 & R & 50 & 76, 179, 205  & R \\ \hline
13 & 69, 79, 93, 162, 176, 186, 242 & R & 51 & 204  & R \\ \hline
14 & 42, 84, 112, 143, 171, 213, 241  & R  & 54 & 108, 147, 201  & NR  \\ \hline 
15 & 85, 170, 240 & R & 57 & 99, 156, 198  & -  \\ \hline
18 & 72, 183, 237  & NR & 58 & 78, 92, 114, 141, 163, 177, 197   & - \\ \hline
19 & 55, 200, 236 & R & 60 & 102, 153, 195 & NR \\ \hline
22 & 104, 151, 233  & NR & 62 & 110, 118, 124, 131, 137, 145, 193  & NR \\ \hline
23 & 232  & - & 73 & 109, 146, 182  & NR  \\ \hline
24 & 66, 189, 231 & NR & 77  & 178  & - \\ \hline
25 & 61, 67, 103, 152, 188, 194, 230  & NR & 90 & 165  & NR \\ \hline
26 & 74, 82, 88,  167, 173, 181, 229  & NR & 94 & 122, 133, 161  & NR \\ \hline
27 & 39, 53, 83, 172, 202, 216, 228  & - & 105 & 150 & NR \\ \hline
28 & 56, 70, 98, 157, 185, 199, 227  & - & 126 & 129 & NR \\ \hline
\end{tabular}
\caption{vN-regularity of elementary CA}
\label{elementary}
\end{table}

Thus, according to our definition, the equivalence class of an elementary CA $\tau$ is 
\[ [\tau] = \{ \tau, \tau \phi_{51}, \phi_{51} \tau, \phi_{51} \tau \phi_{51}, \tau^\star, \tau^\star \phi_{51}, \phi_{51} \tau^\star, \phi_{51} \tau^\star \phi_{51} \}. \]
Note that some of the elements in the above set might be equal. We could, potentially, try to increase this equivalence class by conjugating by another invertible CA. Let $\xi$ be the shift map of $A^\mathbb{Z}$. It is known (see \cite[Sec. 4.16]{M13}) that the invertible elementary CA are rules 15, 51, 85, 170, 204, and 240, which correspond to $\xi \phi_{51}$, $\phi_{51}$, $\xi^{-1} \phi_{51}$, $\xi^{-1}$, $\id$, and $\xi$, respectively. As $\xi$ commutes with every CA, conjugation by one of the previous does not add anything new to $[\tau]$, e.g. $(\xi \phi_{51}) \tau (\phi_{51} \xi ^{-1} )=   \phi_{51} \tau \phi_{51}$. As $(\phi_{51})^\star = \phi_{51}$, the equivalence class $[\tau]$ cannot be increased by further applying the mirrored rule.  

Our notion of equivalence defines 48 equivalence classes of elementary CA (see Table \ref{elementary}) with the property that if an element of the class is vN-regular, then all other elements of the class are vN-regular. 

\begin{lemma}
The equivalence classes of the following rules are not vN-regular: $18$, $22$, $24$, $25$, $26$, $30$, $36$, $37$, $38$, $45$, $46$, $54$, $60$, $62$, $73$, $90$, $105$, $122$ and $126$.
\end{lemma}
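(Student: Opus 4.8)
The plan is to exhibit, for (a representative of) each of the $19$ listed equivalence classes, an explicit configuration that certifies non-vN-regularity via Corollary \ref{cor-vN-regular} (or, failing that, via the slightly stronger Theorem \ref{test-non-regular}). Since vN-regularity is an invariant of our equivalence classes, it suffices to check one rule per class. For each representative rule $\tau$, I would first compute the images of the two constant configurations $\mathbf{0}$ and $\mathbf{1}$ under $\tau$ (by Remark \ref{constant} these are again constant, and their values are read off directly from the bits $M_1$ and $M_8$ of the rule number: $(\mathbf 1)\tau = \mathbf{M_1}$ and $(\mathbf 0)\tau = \mathbf{M_8}$). The obstruction arises precisely when some constant configuration $\mathbf{k}$ lies in $(A^\mathbb{Z})\tau$ but is hit by no constant configuration, i.e. $\mathbf{k} \notin \{\mathbf{M_1}, \mathbf{M_8}\}$; so I need to produce a non-constant $x$ with $(x)\tau = \mathbf{k}$.

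The main work, then, is to find such a witness $x$ for each rule. The natural candidates are the spatially periodic configurations of small period: $\mathbf{0}$, $\mathbf{1}$, the period-$2$ configuration $\cdots 010101 \cdots$, and period-$3$ patterns like $\cdots 001001 \cdots$; Example \ref{ex:rule110} already illustrates exactly this for Rule $110$ (using $x = \cdots 1010 \cdots$, giving $(x)\phi_{110} = \mathbf 1$ while $(\mathbf 1)\phi_{110} = (\mathbf 0)\phi_{110} = \mathbf 0$). Concretely, I would proceed rule by rule: e.g. for Rule $18$ ($M_1 = 0, M_8 = 0$), both constants map to $\mathbf 0$, so I must show $\mathbf 1 \in \Ima$; for Rule $126$ ($M_1=0$, $M_8 = 0$) likewise; for Rule $90$ (the XOR rule, $M_1 = 0$, $M_8 = 0$) the configuration $\cdots 0101 \cdots$ gives $\mathbf 1$ since each cell's two neighbours are $1$ and $1$ — wait, they are equal, so XOR gives $0$; one must instead note $90$ is linear and surjective, $\mathbf 1$ is in the image (any $x$ with alternating-block structure works), and neither constant maps to $\mathbf 1$. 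For each of $18, 22, 24, 25, 26, 30, 36, 37, 38, 45, 46, 54, 60, 62, 73, 90, 105, 122, 126$ I would tabulate $(M_1, M_8)$, identify the "missing" constant image, and give an explicit short-period witness whose orbit under $\tau$ lands on that constant; a handful of these witnesses may require period $3$ or $4$ rather than period $2$, and a couple (such as rules where $M_1 = M_8 = 1$, forcing us to witness $\mathbf 0$) may need a configuration with a single defect, e.g. $\cdots 000100 \cdots$, but periodicity of the witness is not actually required by Corollary \ref{cor-vN-regular} — only that the resulting image be the relevant constant.

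I expect the main obstacle to be purely computational bookkeeping: for some of the $19$ rules the "obvious" period-$2$ configuration will not map onto the needed constant, and one has to search among slightly larger periodic patterns (period $3$, $4$, or an aperiodic configuration with bounded support of defects) until the image is constant. A secondary subtlety is that Corollary \ref{cor-vN-regular} is an \emph{if} statement, not \emph{iff}, so for the few rules where no constant witness works I would fall back on Theorem \ref{test-non-regular}, choosing $x$ with a nontrivial stabiliser $G_x = n\mathbb{Z}$ (a period-$n$ configuration) and checking that every period-$n$ configuration $y$ fails $(y)\tau = x$; verifying this requires only a finite check over $A^{\mathbb{Z}/n\mathbb{Z}}$ since $\tau$ restricts to a map on period-$n$ configurations. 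I would organise the final write-up as a single table listing, for each representative rule, the values $M_1, M_8$, the target constant $\mathbf k$, and the witness $x$, followed by the one-line invocation of Corollary \ref{cor-vN-regular} (or Theorem \ref{test-non-regular}) and the equivalence-class closure; this keeps the proof compact while covering all $19$ classes uniformly.
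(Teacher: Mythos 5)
Your strategy is exactly the paper's: it proves this lemma by one application of Theorem \ref{test-non-regular} per representative rule, using precisely the three short-period witnesses you anticipate --- the constant configuration $\mathbf 1$, the period-$2$ alternating configuration, and a period-$3$ configuration (needed only for Rule $105$) --- recorded in a table, with the equivalence-class invariance doing the rest. Two corrections to your execution, though. First, your Rule $18$ example is wrong: $\mathbf 1$ is \emph{not} in the image of Rule $18$, since a preimage would need every length-$3$ window to be $100$ or $001$, and either choice forces the next window to begin with $01$, which maps to $0$; so the constant-witness route (Corollary \ref{cor-vN-regular}) fails there and one must use the period-$2$ configuration, as the paper does. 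Second, the non-constant fallback is not needed for just ``a few'' rules but for nearly half of them: in the paper's table, rules $18$, $24$, $25$, $26$, $37$, $45$, $73$, $90$ require the period-$2$ witness and $105$ requires period $3$, while only the remaining ten classes are handled by $\mathbf 1$. Your observation that the Theorem \ref{test-non-regular} check reduces to a finite verification over configurations with the prescribed stabiliser is correct and is implicitly what makes the paper's table a proof.
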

\begin{proof}
This result follows by the repeated use of Theorem \ref{test-non-regular}. We define configurations $y_i \in \{0,1 \}^{\mathbb{Z}}$ as follows:
\begin{align*}
(i) y_1 &= 1, \ \forall i \in \mathbb{Z} \\
 (i) y_2 &= \begin{cases}
  1 & \text{ if $i$ is even } \\
  0 & \text{ otherwise }
  \end{cases} \\
  (i)y_3 &= \begin{cases}
  1 & \text{ if $i$ is multiple of $3$ } \\
  0 & \text{ otherwise. }
  \end{cases} 
 \end{align*}
For each rule, Table \ref{table-non-regular} gives which configuration $y_i$ satisfies the hypothesis of Theorem \ref{test-non-regular}. 

\begin{table}[!htb]
\setlength{\tabcolsep}{4pt}
\renewcommand{\arraystretch}{1.2}
\centering
\begin{tabular}{|c|c||c|c||c|c||c|c|}
\hline \text{ Rule } & $x$ &  \text{ Rule } & $x$ &  \text{ Rule } & $x$ &  \text{ Rule } & $x$   \\ \hline 
 18 & $y_2$ & 26  & $y_2$  & 45 & $y_2$ & 73 & $y_2$ \\ \hline 
22 & $y_1$ & 30  & $y_1$ & 46 & $y_1$  & 90 & $y_2$ \\ \hline 
24 & $y_2$  & 36  & $y_1$ & 54 & $y_1$  & 105 & $y_3$ \\ \hline 
25 & $y_2$  & 37  & $y_2$ & 60 & $y_1$  & 122 & $y_1$  \\ \hline 
& & 38  & $y_1$  & 62 & $y_1$  & 126 & $y_1$  \\ \hline 
\end{tabular}
\caption{Elementary CA that are not vN-regular}
\label{table-non-regular}
\end{table}
\end{proof}

\begin{lemma}
The equivalence classes of the following rules are vN-regular: $0$, $2$, $4$, $5$, $10$, $11$, $12$, $13$, $14$, $15$, $29$, $35$, $43$, $51$, $76$, $128$, $192$ and $200$. 
\end{lemma}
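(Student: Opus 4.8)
The plan is to show, for one representative $\tau$ of each of the $18$ equivalence classes, that there is $\sigma \in \CA(\mathbb{Z};A)$ with $\tau\sigma\tau = \tau$; this is enough by the equivalence results established earlier in this section.

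First I would peel off the cases that are essentially immediate. Rules $15$ and $51$ are invertible (rule $15$ is ``shift combined with complement'', with inverse rule $85$, and rule $51$ is $\phi_{51}$, with $\phi_{51}^2=\id$), so one takes $\sigma=\tau^{-1}$. Rule $128$ is already known to be vN-regular from Example~\ref{ex:rules128-254}, with weak generalised inverse rule $254$. Next, several rules are idempotent, i.e.\ $\tau^2=\tau$, namely rules $0$, $4$, $12$ and $76$; for these $\sigma=\tau$ works, and idempotency is checked by a finite computation: $\tau^2$ has memory set $\{-2,\dots,2\}$ and local function $(x_{-2},\dots,x_2)\mapsto \mu(\mu(x_{-2},x_{-1},x_0),\mu(x_{-1},x_0,x_1),\mu(x_0,x_1,x_2))$, so one just verifies this equals $\mu(x_{-1},x_0,x_1)$ over the $32$ possible windows. (To see, e.g., that rule $76$ is idempotent: its local rule is $x_0\wedge\overline{x_{-1}\wedge x_{1}}$, a preimage of a configuration with three consecutive $1$'s would itself need three consecutive $1$'s, which is impossible, so $\Ima(\tau)$ has no word $111$, and then $\tau$ fixes every element of $\Ima(\tau)$.)

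For the remaining rules ($2$, $5$, $10$, $11$, $13$, $14$, $29$, $35$, $43$, together with the representatives $3$ and $19$ of the classes of $192$ and $200$) I would exhibit an explicit $\sigma$, using Lemma~\ref{le-vN-regular}: it suffices to find $\sigma\in\CA(\mathbb{Z};A)$ that sends each $y\in\Ima(\tau)$ to some preimage under $\tau$. The workflow for a given rule is: (i) write the local rule in logical form and read off $\Ima(\tau)$ as a subshift of finite type, i.e.\ determine which short words in $y$ are forbidden because they impose incompatible constraints on any preimage (for instance, for rule $2$, $y_i=1$ forces $(x_{i-1},x_i,x_{i+1})=(0,0,1)$, and two such windows at distance $2$ are incompatible, so the word $101$ is forbidden in $\Ima(\tau)$); (ii) guess a ``canonical preimage'' map and check it is given by a cellular automaton --- in many cases this is a shift (rule $240$ works for rules $2$ and $10$), or the ``complement-and-shift'' rule $85$ (which works for rules $11$ and $14$, and also is $\tau^{-1}$ for rule $15$), or $\tau$ itself when $\tau^3=\tau$ (rule $5$), or another elementary rule ($\tau_{17}$ inverts rule $3$; $\tau_{55}$ inverts rule $19$, exploiting the factorisation of rule $19$ as ``complement followed by remove-isolated-$1$'s''); for a few rules $\sigma$ will need a memory set strictly larger than $\{-1,0,1\}$; (iii) verify $\tau\sigma\tau=\tau$, which by $\mathbb{Z}$-equivariance reduces to checking the value at cell $0$ on an arbitrary input, a finite computation over a bounded window since all three maps have finite memory.

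The easy cases are genuinely easy, and the real difficulty is step (ii) for the rules in the last group: there is no single $\sigma$ that works for all of them, and a naive candidate --- typically a shift of $y$ --- fails precisely on those configurations realising the ``boundary'' of the image subshift, which is exactly why one must pin down the forbidden words before committing to $\sigma$. For the most stubborn rules (such as $13$, $29$, $35$, $43$) the correct $\sigma$ is neither a shift nor a simple flip, and locating it --- together with checking that its memory set is large enough --- is the main obstacle; once $\sigma$ is chosen correctly, the identity $\tau\sigma\tau=\tau$ is routine to verify but must be carried out rule by rule.
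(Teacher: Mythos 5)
Your overall strategy coincides with the paper's: work with one representative per class, dispose of the invertible rules ($15$, $51$), the idempotents, the rules with $\tau^3=\tau$, and the min/max pair of Example~\ref{ex:rules128-254}, and then exhibit an explicit weak generalised inverse $\sigma$ for each remaining representative and verify $\tau\sigma\tau=\tau$ by a finite local computation. Where you commit to concrete inverses your choices check out (rule $240$ for rules $2$ and $10$, rule $85$ for rules $11$ and $14$, rule $17$ for rule $3$, rule $55$ for rule $19$); these differ from the entries of Table~\ref{table-regular} ($2\mapsto 16$, $10\mapsto 80$, etc.), but weak generalised inverses are not unique, so that is not an issue. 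Handling the class of $192$ via the representative $3$ rather than via the $\min/\max$ argument ($192$ paired with $238$) is a legitimate variant.

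The genuine gap is that for the rules you yourself single out as ``stubborn'' --- $13$, $29$, $35$, $43$ --- you describe a search procedure but never actually produce $\sigma$, and the lemma is not proved until those inverses are exhibited and checked. The paper closes exactly this gap by reporting the computer-found inverses $13\mapsto 21$, $35\mapsto 49$, $43\mapsto 113$ in Table~\ref{table-regular}. You also miss two shortcuts that would have removed items from your hard list: by \cite[Table 1]{E15}, rule $29$ (like rule $5$) satisfies $\tau^3=\tau$, so $\sigma=\tau$ works and no search is needed; and rule $200$ is itself idempotent, so there is no need to route through the representative $19$ and the factorisation you propose for it. As written, your argument establishes vN-regularity for all classes except those of $13$, $29$, $35$ and $43$, and for those it is a plan rather than a proof.
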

\begin{proof}
Rules $15$ and $51$ are vN-regular because they are invertible. Rules $0$, $4$, $12$, $76$ and $200$ are idempotent (i.e. $\tau^2 = \tau$, see \cite[Table 1]{E15}). By \cite[Table 1]{E15}, rules $5$ and $29$ satisfy that $\tau^3 = \tau$, so they both are vN-regular. Rule $128$ is vN-regular by Example \ref{ex:rules128-254}. Rule $192$ has local rule $(x)\mu = \min \{ (-1)x, (0)x \}$; similar calculations as in Example \ref{ex:rules128-254} show that Rule $238$, with  local rule $(x)\mu = \max \{ (0)x , (1)x \}$, is a generalized inverse of Rule $192$.

Computer calculations obtained generalised inverses for rules $2$, $10$, $11$, $13$, $14$, $35$ and $43$. These are given in Table \ref{table-regular}. 

\begin{table}[!htb]
\setlength{\tabcolsep}{4pt}
\renewcommand{\arraystretch}{1.2}
\centering
\begin{tabular}{|c|c||c|c||c|c|}
\hline \ Rule \  &  \ Gen. Inv. \ &  \text{ Rule } &  \ Gen. Inv. \  &  \text{ Rule } & \ Gen. Inv. \   \\ \hline 
 0 & 0   & 12  & 12  & 43  & 113  \\ \hline 
 2 & 16    & 13   & 21 & 51   & 51   \\ \hline 
4 & 4    & 14   & 85  & 76  & 76    \\ \hline 
5 & 5   & 15   & 85  & 128 & 254     \\ \hline 
10 & 80  & 29   & 29   & 192 & 238  \\ \hline 
11  & 85  & 35  & 49   & 200 & 200     \\ \hline 
\end{tabular}
\caption{Elementary CA that are vN-regular}
\label{table-regular}
\end{table}

\end{proof}

This leaves $11$ classes for which we could not decide whether they were vNregular or not. Computer calculations show that no member of those classes has an elementary weak inverse.

After this paper was submitted for publication, Ville Salo in \cite{S18} decided the vN-regularity of the $11$ open classes of Table \ref{elementary}, thus completing the answer of Question \ref{question}.  


\section{Finite cellular automata} \label{finite}

In this section we characterise the vN-regular elements in the monoid $\CA(G;A)$ when $G$ and $A$ are both finite (Theorem \ref{characterisation}). In order to achieve this, we summarise some of the notation and results obtained in \cite{CRG16a,CRG16b,CRG17b}.

In the case when $G$ and $A$ are both finite, every subset of $A^G$ is closed in the prodiscrete topology, so the subshifts of $A^G$ are simply unions of $G$-orbits. Moreover, as every map $\tau : A^G \to A^G$ is continuous in this case, $\CA(G;A)$ consists of all the $G$-equivariant maps of $A^G$. Theorem \ref{conjugate} is easily deduced from Lemmas 3, 9 and 10 in \cite{CRG17b}.

\begin{theorem}\label{conjugate}
Let $G$ be a finite group of size $n \geq 2$ and $A$ a finite set of size $q \geq 2$. Let $x,y \in A^G$.
\begin{description}
\item[(i)] Let $\tau \in \CA(G;A)$. If $(x)\tau \in xG$, then $\tau \vert_{xG} \in \Sym(xG)$.
\item[(ii)] There exists $\tau \in \ICA(G;A)$ such that $(x)\tau = y$ if and only if $G_x = G_y$. 
\item[(iii)] There exists $\tau \in \CA(G;A)$ such that $(x) \tau = y$ if and only if $G_x \leq G_y$.
 \end{description} 
\end{theorem}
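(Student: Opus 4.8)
The plan is to prove Theorem \ref{conjugate} by reducing each of the three statements to the cited results (Lemmas 3, 9, 10 of \cite{CRG17b}), using only $G$-equivariance and elementary properties of stabilisers. The recurring tool is the observation (already used in the proof of Theorem \ref{test-non-regular}) that for any $\tau \in \CA(G;A)$ and any $z \in A^G$ we have $G_z \leq G_{(z)\tau}$, since $g \in G_z$ implies $(z)\tau \cdot g = (z \cdot g)\tau = (z)\tau$. A second standing fact is that, because $G$ is finite, $A^G$ is a finite set and $\CA(G;A) = \{\tau : A^G \to A^G \text{ that are } G\text{-equivariant}\}$ (as noted just before the theorem), so all the topological subtleties disappear and a CA is invertible exactly when it is bijective.

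First I would prove \textbf{(i)}. Suppose $(x)\tau \in xG$, say $(x)\tau = x \cdot g_0$. By $G$-equivariance, $\tau$ maps the orbit $xG$ into itself: for any $g \in G$, $(x \cdot g)\tau = ((x)\tau) \cdot g = x \cdot g_0 g \in xG$. So $\tau|_{xG}$ is a well-defined transformation of the finite set $xG$; to conclude $\tau|_{xG} \in \Sym(xG)$ it suffices to show it is injective. Here the key point is the orbit-stabiliser correspondence: $|xG| = [G : G_x]$, and more precisely $x \cdot g = x \cdot h$ iff $G_x g = G_x h$. If $(x \cdot g)\tau = (x \cdot h)\tau$, i.e. $x \cdot g_0 g = x \cdot g_0 h$, then $G_x g_0 g = G_x g_0 h$. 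Using $G_x \leq G_{x \cdot g_0} = G_{(x)\tau}$ from the standing fact together with $|G_x| \le |G_{(x)\tau}|$ and the fact that $(x)\tau$ lies in the same orbit (so $G_{x \cdot g_0} = g_0^{-1} G_x g_0$ has the same size as $G_x$, forcing equality $G_x = g_0^{-1} G_x g_0$), one gets $G_x g = G_x h$, hence $x \cdot g = x \cdot h$. This is essentially Lemma 3 of \cite{CRG17b}, and I would simply cite it after setting up the orbit-into-orbit observation.

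Next, \textbf{(ii)}: the forward direction is immediate from the standing fact applied to both $\tau$ and $\tau^{-1}$ — $G_x \leq G_{(x)\tau} = G_y$ and $G_y \leq G_{(y)\tau^{-1}} = G_x$, so $G_x = G_y$. For the converse, given $G_x = G_y$ one must construct an invertible $\tau$ with $(x)\tau = y$; the natural idea is to send the orbit $xG$ bijectively to the orbit $yG$ via $x \cdot g \mapsto y \cdot g$ (well-defined and bijective precisely because $G_x = G_y$ makes the two orbits ``isomorphic'' as $G$-sets), do the reverse on $yG$, and act as the identity elsewhere; one checks this assignment is consistent when $xG = yG$ and is $G$-equivariant, hence a CA, and it is a bijection, hence in $\ICA(G;A)$. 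This is Lemma 9 of \cite{CRG17b}. Finally \textbf{(iii)}: again the forward direction is the standing fact, $G_x \leq G_{(x)\tau} = G_y$. For the converse, given $G_x \leq G_y$, define $\tau$ on the orbit $xG$ by $x \cdot g \mapsto y \cdot g$ — this is well-defined since $x \cdot g = x \cdot h \Rightarrow G_x g = G_x h \Rightarrow G_y g = G_y h \Rightarrow y \cdot g = y \cdot h$ (this is exactly where $G_x \le G_y$ is used) — and extend arbitrarily but $G$-equivariantly to the rest of $A^G$ (e.g. orbit by orbit, picking one representative per orbit and defining $\tau$ freely there); the result is $G$-equivariant, hence a CA, with $(x)\tau = y$. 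This is Lemma 10 of \cite{CRG17b}.

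The main obstacle, and the only genuinely non-routine part, is the injectivity argument in \textbf{(i)}: one must be careful that $\tau|_{xG}$ need not be injective \emph{a priori} just from $G$-equivariance, and the hypothesis $(x)\tau \in xG$ is exactly what rescues it, via the interplay between $G_x$, $G_{(x)\tau}$ and the conjugacy of stabilisers within a single orbit. Parts \textbf{(ii)} and \textbf{(iii)} are then largely bookkeeping: verifying well-definedness of the orbit maps, $G$-equivariance of the extensions, and (for (ii)) bijectivity. Since the statement says the theorem ``is easily deduced from Lemmas 3, 9 and 10 in \cite{CRG17b}'', in the write-up I would state the needed lemmas, make the translations explicit (each ``$G_x \le G_y$''-type hypothesis corresponds to one lemma), and keep the verifications brief.
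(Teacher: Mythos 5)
Your proposal is correct in substance and follows the same route as the paper, whose entire proof of this theorem is the sentence preceding it (``easily deduced from Lemmas 3, 9 and 10 in \cite{CRG17b}''); what you add is a self-contained reconstruction of those lemmas. Your argument for \textbf{(i)} is exactly the right one, and the non-routine step is handled correctly: from $G_x \leq G_{(x)\tau} = G_{x \cdot g_0} = g_0^{-1} G_x g_0$ and finiteness of $G$ one gets $G_x = g_0^{-1}G_x g_0$, which is what turns the coset computation $G_x g_0 g = G_x g_0 h$ into $G_x g = G_x h$ and hence injectivity on the finite orbit. The forward directions of \textbf{(ii)} and \textbf{(iii)} and the converse of \textbf{(iii)} are likewise sound.

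The one point to repair is the converse of \textbf{(ii)}. The two-rule assignment ($x \cdot g \mapsto y \cdot g$ on $xG$ together with $y \cdot g \mapsto x \cdot g$ on $yG$) is in general \emph{not} consistent when $xG = yG$, contrary to the check you defer. Concretely, take $G = \mathbb{Z}_3$ and $x$ with trivial stabiliser, $y = x \cdot 1$: the first rule sends $x$ to $x \cdot 1$, while the second, reading $x = y \cdot 2$, sends $x$ to $x \cdot 2$; consistency would require $g_1^2 \in G_x$ when $y = x\cdot g_1$. The fix is immediate: split into cases. If $xG \neq yG$ the orbits are disjoint and both rules apply without conflict; if $xG = yG$ then the single rule $x \cdot g \mapsto y \cdot g$ is already a well-defined $G$-equivariant permutation of that one orbit (well-definedness and injectivity both coming from $G_x = G_y$), and one extends by the identity elsewhere. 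With that adjustment the map is bijective and $G$-equivariant, hence lies in $\ICA(G;A)$ since $G$ and $A$ are finite. This is a local slip in a construction whose overall design is correct, not a flaw in the approach.
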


The following result shows that the converse of Theorem \ref{test-non-regular} holds for finite CA.

\begin{theorem}\label{characterisation}
Let $G$ be a finite group and $A$ a finite set of size $q\geq 2$. Let $\tau \in \CA(G;A)$. Then, $\tau$ is vN-regular if and only if for every $y \in (A^G)\tau$ there is $x \in A^G$ such that $(x)\tau = y$ and $G_x = G_y$.
\end{theorem}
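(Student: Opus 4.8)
The plan is to prove both directions, noting that the forward implication is already essentially Theorem \ref{test-non-regular} in contrapositive form, so the real content is the converse. For the forward direction, suppose $\tau$ is vN-regular. Then by Lemma \ref{le-vN-regular} there is $\sigma \in \CA(G;A)$ such that for every $y \in (A^G)\tau$ there is $\hat y$ with $(\hat y)\tau = y$ and $(y)\sigma = \hat y$. The argument in the proof of Theorem \ref{test-non-regular} shows $G_{\hat y} \leq G_{(\hat y)\tau} = G_y$ and $G_y \leq G_{(y)\sigma} = G_{\hat y}$, hence $G_{\hat y} = G_y$; so $x := \hat y$ works. (Alternatively: if no such $x$ existed for some $y$, Theorem \ref{test-non-regular} with $x$ replaced by $y$ would contradict vN-regularity.)

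For the converse, assume that for every $y \in (A^G)\tau$ there exists a preimage $x$ with $G_x = G_y$. I want to construct $\sigma \in \CA(G;A)$ with $\tau\sigma\tau = \tau$. The idea is to build $\sigma$ orbit-by-orbit on the image $(A^G)\tau$, using the $G$-equivariance requirement as a constraint but also as a labor-saving device: it suffices to define $\sigma$ on one representative of each $G$-orbit inside $(A^G)\tau$ and then extend equivariantly, checking well-definedness against the stabiliser. Concretely, pick a transversal $\{y_\lambda\}$ of the $G$-orbits contained in $(A^G)\tau$; for each $\lambda$ choose a preimage $x_\lambda$ with $(x_\lambda)\tau = y_\lambda$ and $G_{x_\lambda} = G_{y_\lambda}$ (this is exactly the hypothesis); define $(y_\lambda)\sigma := x_\lambda$ and, more generally, $(y_\lambda \cdot g)\sigma := x_\lambda \cdot g$ for all $g \in G$. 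On configurations outside $(A^G)\tau$, define $\sigma$ arbitrarily, say as the identity. Then extend the partial map to a genuine element of $\CA(G;A)$ — since $G$ and $A$ are finite every $G$-equivariant map is a CA by Curtis–Hedlund, so I only need $G$-equivariance of the resulting total map.

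The key technical point — and the main obstacle — is checking that $\sigma$ is well-defined on each orbit $y_\lambda G$, i.e. that $y_\lambda \cdot g = y_\lambda \cdot g'$ implies $x_\lambda \cdot g = x_\lambda \cdot g'$. This reduces to: $g g'^{-1} \in G_{y_\lambda}$ implies $g g'^{-1} \in G_{x_\lambda}$, which holds precisely because we arranged $G_{x_\lambda} = G_{y_\lambda}$ (in fact $G_{x_\lambda} \subseteq G_{y_\lambda}$ always holds since $\tau$ is equivariant, so the equality forces the needed containment $G_{y_\lambda} \subseteq G_{x_\lambda}$ — this is where the hypothesis is used essentially, not just conveniently). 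One also needs the orbits of distinct transversal elements to be disjoint so the orbit-wise definitions do not conflict, which is automatic. Finally verify $\tau\sigma\tau = \tau$: for arbitrary $w \in A^G$, write $(w)\tau = y_\lambda \cdot g$ for suitable $\lambda, g$; then $(w)\tau\sigma = (y_\lambda \cdot g)\sigma = x_\lambda \cdot g$, and $(x_\lambda \cdot g)\tau = (x_\lambda)\tau \cdot g = y_\lambda \cdot g = (w)\tau$ by equivariance, as desired. Thus $\sigma$ is a weak generalised inverse of $\tau$ and $\tau$ is vN-regular. I should also remark that the result can be stated uniformly only for $q \geq 2$; the trivial case is covered separately or is vacuous, and Theorem \ref{conjugate}(iii) can be invoked to streamline the existence of the extension if one prefers a cleaner write-up.
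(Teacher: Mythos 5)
Your proposal is correct and follows essentially the same route as the paper's proof: the forward direction via (the contrapositive of) Theorem \ref{test-non-regular}, and the converse by choosing orbit representatives in the image, picking preimages with equal stabilisers, defining the weak inverse equivariantly orbit-by-orbit (identity off the image), and checking well-definedness via $G_{x_\lambda}=G_{y_\lambda}$. The verification of $\tau\sigma\tau=\tau$ is also the same computation.
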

\begin{proof}
The ``only if'' part follows by the contrapositive of Theorem \ref{test-non-regular}.  

Conversely, suppose that for every $y \in (A^G)\tau$ there is $x \in A^G$ such that $(x)\tau = y$ and $G_x = G_y$. Choose pairwise distinct $G$-orbits $y_1G, \dots, y_\ell G$ such that  
\[ (A^G)\tau = \bigcup_{i=1}^{\ell} y_i G. \]
For each $i$, fix $y_i^\prime \in A^G$ such that $(y_i^\prime) \tau =  y_i$ and $G_{y_i}= G_{y_i^\prime}$. We define $\phi : A^G \to A^G$ as follows: for any $z \in A^G$,
\[ (z)\phi := \begin{cases}
z & \text{if } z \not \in (A^G)\tau, \\
y_i^\prime \cdot g & \text{if } z = y_i \cdot g \in y_i G.
 \end{cases} \]
The map $\phi$ is well-defined because
\[ y_i \cdot g = y_i \cdot h \ \Longleftrightarrow \ hg^{-1} \in G_{y_i} = G_{y_i^\prime} \ \Longleftrightarrow  \ y_i^\prime \cdot g = y_i^\prime \cdot h.  \]
Clearly, $\phi$ is $G$-equivariant, so $\phi \in \CA(G;A)$. Now, for any $x \in A^G$ with $(x)\tau = y_i \cdot g$, 
\[ (x) \tau \phi \tau = (y_i \cdot g) \phi \tau = (y_i^\prime \cdot g) \tau = (y_i^\prime) \tau \cdot g =  y_i \cdot g = (x) \tau.  \]
This proves that $\tau \phi \tau = \tau$, so $\tau$ is vN-regular. 
\end{proof}

Our goal now is to find a vN-regular submonoid of $\CA(G;A)$ and describe its structure (see Theorem \ref{le:max vN-regular}). In order to achieve this, we need some further terminology and basic results. 

Say that two subgroups $H_1$ and $H_2$ of $G$ are \emph{conjugate} in $G$ if there exists $g \in G$ such that $g^{-1} H_1 g = H_2$. This defines an equivalence relation on the subgroups of $G$. Denote by $[H]$ the conjugacy class of the subgroup $H$ of $G$.
Define the \emph{box} in $A^G$ corresponding to $[H]$, where $H \leq G$, by 
\[ B_{[H]}(G;A) := \{ x \in A^G : [G_x] =  [H] \}. \]
As any subgroup of $G$ is the stabiliser of some configuration in $A^G$, the set $\{ B_{[H]}(G;A) : H \leq G \}$ is a partition of $A^G$. Note that $B_{[H]}(G;A)$ is a subshift of $A^G$ (because $G_{(x \cdot g)} = g^{-1} G_x g$) and, by the Orbit-Stabiliser Theorem, all the $G$-orbits contained in $B_{[H]}(G;A)$ have equal sizes. When $G$ and $A$ are clear from the context, we write simply $B_{[H]}$ instead of $B_{[H]}(G;A)$. 

\begin{example}
For any finite group $G$ and finite set $A$ of size $q$, we have 
\[ B_{[G]} = \{ \mathbf{k} \in A^G : \mathbf{k} \text{ is constant} \}. \]
\end{example}

Let $\alpha_{[H]}(G;A)$ be the number of $G$-orbits inside the box $B_{[H]}$; for example, $\alpha_{[G]}(G; A) = \vert A \vert$, as every constant configuration defines an orbit of size $1$.

A submonoid $R \leq M$ is called \emph{maximal vN-regular} if there is no vN-regular monoid $K$ such that $R < K < M$.  

\begin{theorem} \label{le:max vN-regular}
Let $G$ be a finite group and $A$ a finite set of size $q\geq 2$. Let
\[ R := \left\{ \sigma \in \CA(G;A) : G_x = G_{(x)\sigma} \text{ for all } x \in A^G  \right\}. \]
\begin{description}
\item[(i)] $R = \left\{  \sigma \in \CA(G;A)  : (B_{[H]})\sigma \subseteq B_{[H]}, \ \forall H \leq G  \right\}$. 
\item[(ii)] $\ICA(G;A) \leq R$.
\item[(iii)] $R$ is a vN-regular monoid. 
\item[(iv)] $R$ is not a maximal vN-regular submonoid of $\CA(G;A)$.
\end{description}
\end{theorem}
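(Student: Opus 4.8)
The plan is to verify the four parts in order, since each is fairly concrete given the machinery already set up.

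For \textbf{(i)}, I would argue both inclusions. If $\sigma \in R$, then for every $x$ we have $G_x = G_{(x)\sigma}$, hence $[G_x] = [G_{(x)\sigma}]$, so $(x)\sigma \in B_{[G_x]}$; since $B_{[H]} = \{x : [G_x] = [H]\}$, this gives $(B_{[H]})\sigma \subseteq B_{[H]}$. Conversely, suppose $(B_{[H]})\sigma \subseteq B_{[H]}$ for all $H$. Fix $x$ and let $H = G_x$. Then $(x)\sigma \in B_{[H]}$, so $G_{(x)\sigma}$ is conjugate to $H = G_x$; but we also always have $G_x \leq G_{(x)\sigma}$ (this is the first paragraph of the proof of Theorem~\ref{test-non-regular}). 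A subgroup of a finite group conjugate to a subgroup containing it must equal it (compare orders: $|G_x| \leq |G_{(x)\sigma}| = |g^{-1} G_x g| = |G_x|$), so $G_x = G_{(x)\sigma}$, i.e. $\sigma \in R$. For \textbf{(ii)}, if $\phi \in \ICA(G;A)$ then by Theorem~\ref{conjugate}(ii) applied to $x$ and $(x)\phi$ — or more directly, since $\phi$ is bijective and $G$-equivariant, $g \in G_x \iff x\cdot g = x \iff (x\cdot g)\phi = (x)\phi \iff (x)\phi\cdot g = (x)\phi \iff g \in G_{(x)\phi}$ — we get $G_x = G_{(x)\phi}$, so $\phi \in R$; and $R$ is clearly closed under composition and contains $\id$, so it is a submonoid.

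For \textbf{(iii)}, I would reuse the construction from the proof of Theorem~\ref{characterisation}. Given $\sigma \in R$, every $y \in (A^G)\sigma$ has a preimage $x$ with $G_x = G_y$ (namely any preimage, since membership in $R$ forces $G_x = G_{(x)\sigma} = G_y$). Hence the hypothesis of Theorem~\ref{characterisation} holds and $\sigma$ is vN-regular in $\CA(G;A)$; but I must check the weak generalised inverse $\phi$ built there actually lies in $R$. Looking at that construction: $\phi$ fixes configurations outside $(A^G)\sigma$ pointwise, and on $y_i G$ it sends $y_i \cdot g \mapsto y_i' \cdot g$ where $G_{y_i} = G_{y_i'}$; since $G_{(z)\phi}$ equals $G_z$ in both cases (for the identity-on-the-complement part trivially, for the other part because $G_{y_i \cdot g} = g^{-1}G_{y_i}g = g^{-1}G_{y_i'}g = G_{y_i'\cdot g}$), we get $\phi \in R$. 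Thus $R$ is a vN-regular monoid. The one subtlety to state carefully is that vN-regularity of the \emph{monoid} $R$ requires the inverse to be \emph{in} $R$, not merely in $\CA(G;A)$ — that is exactly what the preceding check secures.

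For \textbf{(iv)}, I need to exhibit a vN-regular submonoid $K$ with $R < K < \CA(G;A)$. The natural candidate is to enlarge $R$ by allowing maps that collapse one box into a strictly smaller one in a controlled, still-regular way — for instance, $K := \{\sigma \in \CA(G;A) : (B_{[H]})\sigma \subseteq B_{[H]} \text{ for all } H \ne [G'], \text{ and } \sigma|_{\text{rest}} \text{ behaves nicely}\}$ for a suitable minimal box; more concretely one takes $K$ to be generated by $R$ together with one well-chosen idempotent (or transformation) that maps some box $B_{[H]}$ into $B_{[G]}$ (the constants) while still admitting a generalised inverse inside $K$, and one checks $K \ne \CA(G;A)$ because, e.g., it still cannot realise the bad behaviour exploited in Corollary~\ref{cor-vN-regular} / Theorem~\ref{th:vN-regular}. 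The main obstacle is precisely pinning down such a $K$: one must simultaneously ensure $K$ is closed under composition, that every element of $K$ has a generalised inverse lying in $K$ (not just in $\CA$), and that $K$ is a proper submonoid — this is a genuine balancing act, and I expect the bulk of the work to be constructing the right candidate and verifying closure and regularity, probably by leaning on the orbit/stabiliser description in Theorem~\ref{conjugate} to build explicit inverses box by box.
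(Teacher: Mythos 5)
Parts \textbf{(i)}--\textbf{(iii)} of your argument are correct and essentially the paper's route: (i) and (ii) are the content of Theorem \ref{conjugate} plus the observation that $G_x \leq G_{(x)\sigma}$ always holds and a conjugate subgroup containing $G_x$ in a finite group must equal it; (iii) reuses the construction in the proof of Theorem \ref{characterisation}. Your extra check that the weak generalised inverse $\phi$ built there actually lies in $R$ (because $G_{y_i'\cdot g} = g^{-1}G_{y_i'}g = g^{-1}G_{y_i}g = G_{y_i\cdot g}$) is exactly the point the paper leaves implicit, and it is needed: vN-regularity of the monoid $R$ requires inverses inside $R$. Good.

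Part \textbf{(iv)}, however, is not a proof: you describe what a witness $K$ ought to look like and explicitly defer ``pinning down such a $K$'' and ``verifying closure and regularity''. That is precisely the content of the statement, so as written there is a genuine gap. The paper's construction is concrete and short: pick $x, y \in A^G$ with $G_x < G_y$ (strictly), so they lie in different boxes; let $\tau \in \CA(G;A)$ send $x$ to $y$, send all of $B_{[G_y]}$ onto the single orbit $yG$, and fix everything else. This $\tau$ is well defined and $G$-equivariant, it is vN-regular by Theorem \ref{characterisation} (every $z$ in its image has a preimage in its own box), and $\tau \notin R$ since it moves $xG$ out of its box. One then shows $K := \langle R, \tau\rangle$ is vN-regular: any $\sigma \in K\setminus R$ can be written as $\rho_1\tau\rho_2$ with $\rho_2 \in R$, and for $z \in (A^G)\sigma$ either $z \notin (B_{[G_y]})\sigma$'s problematic range and has a preimage in its own box, or $(B_{[G_y]})\sigma = (yG)\rho_2 = zG$ and $z$ has a preimage in $B_{[G_y]}$; Theorem \ref{characterisation} then applies. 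Two further remarks. First, your worry about verifying $K \neq \CA(G;A)$ is unnecessary: since $\CA(G;A)$ is not vN-regular when $|G|,|A|\geq 2$ (Theorem \ref{th:vN-regular}), any vN-regular submonoid is automatically proper. Second, your instinct to adjoin an idempotent mapping a box into $B_{[G]}$ is close to, but not the same as, the paper's choice; the paper's $\tau$ collapses the \emph{larger-stabiliser} box $B_{[G_y]}$ onto one of its own orbits while pushing $xG$ upward, which is what makes the preimage condition of Theorem \ref{characterisation} easy to verify for all of $K$. Without some such explicit choice and the closure/regularity verification, part (iv) remains unproved.
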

\begin{proof}
Parts \textbf{(i)} and \textbf{(ii)} follow by Theorem \ref{conjugate}, while part \textbf{(iii)} follows by Theorem \ref{characterisation}.

For part \textbf{(iv)}, let $x, y \in A^G$ be such that $G_x < G_y$, so $x$ and $y$ are in different boxes. Define $\tau \in \CA(G;A)$ such that $(x) \tau = y$, $(B_{[G_y]})\tau = yG$, and $\tau$ fixes any other configuration in $A^G \setminus (B_{[G_y]} \cup \{ xG\})$. It is clear by Theorem \ref{characterisation} that $\tau$ is vN-regular. We will show that $K:=\langle R, \tau \rangle$ is a vN-regular submonoid of $\CA(G;A)$. Let $\sigma \in K$ and $z \in (A^G)\sigma$. If $\sigma \in R$, then it is obviously vN-regular, so assume that $\sigma = \rho_1 \tau \rho_2$ with $\rho_1 \in K$ and $\rho_2 \in R$. If $z \in A^G \setminus (B_{[G_y]})$, it is clear that $z$ has a preimage in its own box; otherwise $(B_{[G_y]}) \sigma = (yG) \rho_2 = zG$ and $z$ has a preimage in $B_{[G_y]}$. Hence $\sigma$ is vN-regular and so is $K$. 
\end{proof}

For any integer $\alpha \geq 2$ and any group $C$, the \emph{wreath product} of $C$ by $\Tran_\alpha$ is the monoid
\[ C \wr \Tran_{\alpha} := \{ (v, f) : v \in C ^\alpha, f \in \Tran_\alpha \} \]
equipped with the operation
\[ (v, f) \cdot (w, g) = ( v (f \cdot w),  f \circ g ), \text{ for any } v,w \in C^\alpha, f,g \in \Tran_\alpha, \]
where $f$ acts on the left on $w$ as follows:
\[  f \cdot w = f \cdot (w_1, w_2, \dots, w_\alpha) := (w_{(1)f}, w_{(2)f}, \dots, w_{(\alpha)f}). \]
For a more detailed description of the wreath product of monoids see \cite[Sec. 2]{AS09}.

For any $H \leq G$, let $N_G(H):= \{ g \in G : g H g^{-1} = H \}$ be the normaliser of $H$ in $G$. 

\begin{theorem}\label{structure-monoid}
Let $G$ be a finite group, $A$ a finite set of size $q\geq 2$ and $R \leq \CA(G;A)$ as given by Theorem \ref{le:max vN-regular}. Let $H_1, H_2, \dots, H_r$ be a complete list of representatives for the conjugacy classes of subgroups of $G$. Then:
\[ R \cong \prod_{i=1}^r (N_G(H_i)/ H_i) \wr \Tran_{\alpha_i}, \]
where $\alpha_i = \alpha_{[H_i]}(G;A)$.
\end{theorem}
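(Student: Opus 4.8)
The plan is to split $R$ according to the box decomposition of $A^G$ and then identify the factor coming from each box with the corresponding wreath product.

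\emph{Step 1: reduce to a single box.} By Theorem \ref{le:max vN-regular}(i), every $\sigma \in R$ satisfies $(B_{[H_i]})\sigma \subseteq B_{[H_i]}$ for all $i$. Since $\{ B_{[H_i]} \}_{i=1}^{r}$ is a partition of $A^G$ into subshifts and, for finite $G$ and $A$, $\CA(G;A)$ is precisely the monoid of $G$-equivariant self-maps of $A^G$, an element of $\CA(G;A)$ preserving every box lies in $R$ if and only if its restriction to each $B_{[H_i]}$ is $G$-equivariant; conversely, any family of $G$-equivariant self-maps of the boxes glues to such an element. Hence restriction yields a monoid isomorphism $R \cong \prod_{i=1}^{r} M_i$, where $M_i$ is the monoid, under composition, of $G$-equivariant maps $B_{[H_i]} \to B_{[H_i]}$. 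It now suffices to prove $M_i \cong (N_G(H_i)/H_i) \wr \Tran_{\alpha_i}$.

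\emph{Step 2: equivariant maps between orbits in a box.} Fix $i$ and abbreviate $H = H_i$, $C = N_G(H)/H$, $\alpha = \alpha_i$, and let $O_1, \dots, O_\alpha$ be the $G$-orbits contained in $B_{[H]}$. Every configuration in $B_{[H]}$ has stabiliser conjugate to $H$, so all the $O_j$ share the common size $[G:H]$; moreover, the stabilisers of the points of a single orbit run over an entire conjugacy class, so each $O_j$ contains a configuration $x_j$ with $G_{x_j} = H$. A short calculation then shows: a map $\phi \colon O_j \to O_k$ is $G$-equivariant if and only if $(x_j \cdot g)\phi = x_k \cdot cg$ for all $g \in G$ and some $c \in N_G(H)$; two such elements $c$ give the same $\phi$ exactly when they lie in the same coset of $H$; and every $G$-equivariant $\phi \colon O_j \to O_k$ is automatically bijective, because its image is a $G$-invariant subset of the orbit $O_k$, hence all of $O_k$, and $|O_j| = |O_k|$. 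Thus the $G$-equivariant maps $O_j \to O_k$ are in canonical bijection with $C$ (using $H \trianglelefteq N_G(H)$), independently of $j$ and $k$, and composing them corresponds to multiplying in $C$.

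\emph{Step 3: assemble the wreath product.} Define $\Psi \colon M_i \to C \wr \Tran_\alpha$ by $\sigma \mapsto (v_\sigma, f_\sigma)$, where $f_\sigma \in \Tran_\alpha$ is determined by $(O_j)\sigma = O_{(j)f_\sigma}$ (a single orbit, since $\sigma$ is $G$-equivariant) and $v_\sigma(j) \in C$ represents the bijection $\sigma|_{O_j} \colon O_j \to O_{(j)f_\sigma}$ as in Step 2. By Step 2, $\Psi$ is a bijection, since $(v,f)$ reconstructs $\sigma$ orbit by orbit. Computing $(x_j)(\sigma\tau)$ gives $f_{\sigma\tau} = f_\sigma \circ f_\tau$ and $v_{\sigma\tau}(j) = v_\tau((j)f_\sigma)\, v_\sigma(j)$, so $\Psi$ is an isomorphism onto the wreath product formed with the \emph{opposite} group in the $C$-coordinate; since $C$ is a group, inverting in each coordinate identifies this with $C \wr \Tran_\alpha$ as defined in the paper. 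Hence $M_i \cong (N_G(H_i)/H_i) \wr \Tran_{\alpha_i}$ (with the convention $C \wr \Tran_1 = C$ when $\alpha_i = 1$), and taking the product over $i$ completes the proof. The conceptual content here is light and the work is entirely in the conventions: the key point that makes a wreath product appear, rather than a more complicated Rees-matrix-type object, is that a $G$-equivariant map between two orbits of equal size is forced to be a bijection; the secondary nuisance, which I expect to be the main place requiring care, is tracking the right-action and composition orders, which is precisely why $\Psi$ naturally produces the opposite-group wreath product.
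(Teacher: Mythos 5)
Your proof is correct and follows essentially the same route as the paper: decompose $R$ along the boxes $B_{[H_i]}$, observe that each box splits into $\alpha_i$ orbits of equal size permuted by $G$-equivariant maps, and identify the equivariant maps between two such orbits with $N_G(H_i)/H_i$. The only difference is one of packaging: where the paper outsources the wreath-product identification to \cite[Lemma 2.1]{AS09} and the $N_G(H_i)/H_i$-action to the proof of Theorem 3 in \cite{CRG17b}, you verify these directly (and you also make explicit two points the paper glosses over, namely that the embedding into $\prod_i \Tran(B_{[H_i]})$ is onto the product of the projections because equivariant maps glue across boxes, and that the composition order produces the opposite group, which is harmless since a group is isomorphic to its opposite).
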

\begin{proof}
By Theorem \ref{le:max vN-regular} \textbf{(i)}, we see that $R$ may be embedded in $\prod_{i=1}^n \Tran(B_{[H_i]})$. Any CA preserves the uniform partition of $B_{[H_i]}$ into $G$-orbits, so by \cite[Lemma 2.1]{AS09}, the projection of $R$ to $\Tran(B_{[H_i]})$ is contained in $\Tran(O) \wr \Tran_{\alpha}$, where $O$ is a fixed orbit contained in $B_{[H]}$. Any CA acts on the orbit $O$ by $N_G(H_i)/ H_i$ and it may induce any transformation among orbits (c.f. the proof of Theorem 3 in \cite{CRG17b}). Thus, the projection of $R$ to $\Tran(B_{[H_i]})$ is precisely $ (N_G(H_i)/ H_i) \wr \Tran_{\alpha_i}$.
\end{proof}


\section{Linear cellular automata} \label{linear}

Let $V$ a vector space over a field $\mathbb{F}$. For any group $G$, the configuration space $V^G$ is also a vector space over $\mathbb{F}$ equipped with the pointwise addition and scalar multiplication. Denote by $\End_{\mathbb{F}}(V^G)$ the set of all $\mathbb{F}$-linear transformations of the form $\tau : V^G \to V^G$. Define
\[ \LCA(G;V) := \CA(G;V) \cap \End_{\mathbb{F}}(V^G). \]
Note that $\LCA(G;V)$ is not only a monoid, but also an $\mathbb{F}$-algebra (i.e. a vector space over $\mathbb{F}$ equipped with a bilinear binary product), because, again, we may equip $\LCA(G;V)$ with the pointwise addition and scalar multiplication. In particular, $\LCA(G;V)$ is also a ring. 

As in the case of semigroups, vN-regular rings have been widely studied and many important results have been obtained. In this chapter, we study the vN-regular elements of $\LCA(G;V)$ under some natural assumptions on the group $G$. 

First, we introduce some preliminary results and notation. Let $R$ be a ring. The \emph{group ring} $R[G]$ is the set of all functions $f : G \to R$ with finite support (i.e. the set $\{ g \in G : (g)f \neq 0 \}$ is finite). Equivalently, the group ring $R[G]$ may be defined as the set of all formal finite sums $\sum_{g \in G} a_g g$ with $a_g \in R$. The multiplication in $R[G]$ is defined naturally using the multiplications of $G$ and $R$:
\[ \sum_{g \in G} a_g g \sum_{h \in G} a_h h = \sum_{g, h \in G} a_g a_h  gh. \]
 If we let $R :=\End_{\mathbb{F}}(V)$, it turns out that $\End_{\mathbb{F}}(V)[G]$ and $\LCA(G;V)$ are isomorphic as $\mathbb{F}$-algebras (see \cite[Theorem 8.5.2]{CSC10}). 

Define the \emph{order} of $g \in G$ by $o(g) :=  \vert \langle g \rangle \vert$ (i.e. the size of the subgroup generated by $g$). The group $G$ is \emph{torsion-free} if the identity is the only element of finite order; for instance, the groups $\mathbb{Z}^d$, for $d \in \mathbb{N}$, are torsion-free groups. The group $G$ is \emph{elementary amenable} if it may be obtained from finite groups or abelian groups by a sequence of group extensions or direct unions.   

In the following theorem we characterise the vN-regular linear cellular automata over fields and torsion-free elementary amenable groups (such as $\mathbb{Z}^d$, $d \in \mathbb{N}$).

\begin{theorem}\label{torsion-free}
Let $G$ be a torsion-free elementary amenable group and let $V = \mathbb{F}$ be any field. A non-zero element $\tau \in \LCA(G; \mathbb{F})$ is vN-regular if and only if it is invertible.
\end{theorem}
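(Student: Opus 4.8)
The plan is to exploit the $\mathbb{F}$-algebra isomorphism $\LCA(G;\mathbb{F}) \cong \mathbb{F}[G]$ (the case $V=\mathbb{F}$ of \cite[Theorem 8.5.2]{CSC10}, since $\End_{\mathbb{F}}(\mathbb{F}) \cong \mathbb{F}$) and to translate the problem into a statement about the group ring $\mathbb{F}[G]$. Under this identification, ``vN-regular'' and ``invertible'' both transfer verbatim, so it suffices to prove: a non-zero element of $\mathbb{F}[G]$ is von Neumann regular in the ring-theoretic sense if and only if it is a unit. One direction is trivial, since every unit $u$ satisfies $u u^{-1} u = u$. So the content is the forward direction.

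For the forward direction I would invoke the known structural properties of group rings of torsion-free elementary amenable groups. The key external facts are: (1) for $G$ torsion-free elementary amenable and $\mathbb{F}$ a field, $\mathbb{F}[G]$ is a domain (this is a theorem of Kropholler–Linnell–Moody, building on the Farrell–Stallings and Formanek machinery — torsion-free elementary amenable groups satisfy the zero-divisor conjecture); and (2) $\mathbb{F}[G]$ has no nontrivial idempotents, equivalently the only idempotents are $0$ and $1$ (again because $G$ is a torsion-free group satisfying suitable conditions — the idempotent conjecture holds here, e.g. via the Bass trace conjecture for elementary amenable groups, or directly since a domain has no idempotents other than $0,1$ — in fact (2) follows immediately from (1)). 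Given these, suppose $\tau \leftrightarrow a \in \mathbb{F}[G]$ is non-zero and vN-regular, so there is $b$ with $aba = a$. Then $e := ab$ and $f := ba$ are idempotents; since $a \neq 0$ and $\mathbb{F}[G]$ is a domain, $aba = a$ forces (multiplying on a side by something or cancelling) $ba = 1$ and $ab = 1$, so $a$ is a unit. More carefully: $e = ab$ is idempotent, hence $e \in \{0,1\}$; if $e = 0$ then $a = aba = ea = 0$, contradiction, so $ab = 1$; symmetrically $f = ba \in \{0,1\}$ and $f=0$ gives $a = 0$, so $ba = 1$; thus $b = a^{-1}$ and $a$ is invertible. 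Transporting back, $\tau$ is invertible in $\LCA(G;\mathbb{F})$.

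The main obstacle — really the only substantive point — is citing and correctly applying the right theorem on group rings: that $\mathbb{F}[G]$ is a domain (has no zero divisors) when $G$ is torsion-free elementary amenable. This is where the hypothesis ``elementary amenable'' (rather than merely ``torsion-free'') is genuinely used, and it is exactly the subtlety that the authors flag: the corrected statement, since the earlier version's proof was flawed. I would state this as a cited lemma (Kropholler–Linnell–Moody, or as packaged in standard references on the zero-divisor problem) and keep the rest of the argument as the short idempotent-elimination computation above, being careful that all cancellations are justified purely by absence of zero divisors and not by any commutativity (the argument via $ab, ba \in \{0,1\}$ only needs that a domain has no idempotents besides $0$ and $1$, which holds in any domain, commutative or not).
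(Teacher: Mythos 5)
Your proposal is correct and follows essentially the same route as the paper: both pass to $\mathbb{F}[G]$ via the isomorphism $\LCA(G;\mathbb{F}) \cong \mathbb{F}[G]$, invoke the Kropholler--Linnell--Moody theorem that this group ring has no zero-divisors, and then conclude invertibility from $aba=a$ by a short cancellation (the paper cancels $\tau$ directly in $\tau(\sigma\tau-1)=0$ and $(\tau\sigma-1)\tau=0$, while your idempotent phrasing via $ab, ba \in \{0,1\}$ is a trivially equivalent repackaging of the same fact).
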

\begin{proof}
It is clear that any invertible element is vN-regular. Let $\tau \in \LCA(G;\mathbb{F})$ be non-zero vN-regular. In this case, $\End_{\mathbb{F}}(\mathbb{F}) \cong \mathbb{F}$, so $\LCA(G;\mathbb{F}) \cong \mathbb{F}[G]$. By definition, there exists $\sigma \in \LCA(G;V)$ such that $\tau \sigma \tau = \tau$. As $\LCA(G;V)$ is a ring, the previous implies that
\[ \tau (\sigma\tau - 1) = 0 \ \text{ and } \ (\tau \sigma - 1) \tau = 0, \]
where $1 = 1e$ and $0=0e$ are the identity and zero endomorphisms, respectively. It was established in \cite[Theorem 1.4]{KLM88} that $\mathbb{F}[G]$ has no zero-divisors whenever $G$ is a torsion-free elementary amenable group. As $\tau \neq 0$, then $\sigma\tau - 1 = 0$ and $\tau \sigma - 1 = 0$, which means that $\tau$ is invertible.
\end{proof}

The argument of the previous result works as long as the group ring $\mathbb{F}[G]$ has no zero-divisor. This is connected with the well-known Kaplansky's conjecture which states that $\mathbb{F}[G]$ has no zero-divisors when $G$ is a torsion-free group.    

The \emph{characteristic} of a field $\mathbb{F}$, denoted by $\Char(\mathbb{F})$, is the smallest $k \in \mathbb{N}$ such that 
\[ \underbrace{1+1+\cdots+1}_{k \text{ times}} = 0 ,\]
where $1$ is the multiplicative identity of $\mathbb{F}$. If no such $k$ exists we say that $\mathbb{F}$ has characteristic $0$.

A group $G$ is \emph{locally finite} if every finitely generated subgroup of $G$ is finite; in particular, the order of every element of $G$ is finite. Examples of such groups are finite groups and infinite direct sums of finite groups.

\begin{theorem}\label{vN-regular-ring}
Let $G$ be a group and let $V$ be a finite-dimensional vector space over $\mathbb{F}$. Then, $\LCA(G;V)$ is vN-regular if and only if $G$ is locally finite and $\Char(\mathbb{F}) \nmid o(g)$, for all $g \in G$.
\end{theorem}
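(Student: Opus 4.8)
The plan is to translate the statement into ring theory via the isomorphism $\LCA(G;V) \cong \End_{\mathbb{F}}(V)[G] \cong M_d(\mathbb{F})[G]$, where $d = \dim_{\mathbb{F}} V$, and then invoke known results on von Neumann regularity of group rings. Since $M_d(\mathbb{F})[G] \cong M_d(\mathbb{F}[G])$, and a matrix ring $M_d(S)$ over a ring $S$ is vN-regular if and only if $S$ is vN-regular, the problem reduces to: $\mathbb{F}[G]$ is vN-regular if and only if $G$ is locally finite and $\Char(\mathbb{F}) \nmid o(g)$ for all $g \in G$. This last equivalence is a classical theorem — it is due to Connell and Villamayor (see, e.g., Passman's book on group rings). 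So the core of the proof is really just assembling these facts in the right order.

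First I would spell out the algebra isomorphisms: $\LCA(G;V) \cong \End_{\mathbb{F}}(V)[G]$ is already cited in the excerpt (\cite[Theorem 8.5.2]{CSC10}); then choosing a basis of $V$ identifies $\End_{\mathbb{F}}(V)$ with $M_d(\mathbb{F})$, and there is a standard ring isomorphism $M_d(\mathbb{F})[G] \cong M_d(\mathbb{F}[G])$ obtained by viewing a group-ring element $\sum_g m_g g$ (with $m_g \in M_d(\mathbb{F})$) as the matrix whose $(i,j)$-entry is $\sum_g (m_g)_{ij} g$. Next I would record the lemma that $M_d(S)$ is vN-regular iff $S$ is vN-regular; this is elementary (vN-regularity passes to corners via idempotents, and conversely $M_d(S)$ vN-regular forces $S \cong e M_d(S) e$ vN-regular, where $e = E_{11}$; the forward direction is the slightly more involved one but is standard). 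Finally I would cite the Connell–Villamayor characterisation: for a field $\mathbb{F}$, $\mathbb{F}[G]$ is von Neumann regular $\iff$ $G$ is locally finite and has no elements of order divisible by $\Char(\mathbb{F})$. Stringing these equivalences together gives the theorem.

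If the authors prefer a more self-contained treatment, the "only if" direction can be argued directly: if some $g \in G$ has infinite order, then $\mathbb{F}[\langle g\rangle] \cong \mathbb{F}[t, t^{-1}]$ is not vN-regular (it is a domain that is not a field), and if $\Char(\mathbb{F}) = p$ divides $o(g)$, then $\mathbb{F}[\langle g^{o(g)/p}\rangle] \cong \mathbb{F}[t]/(t^p - 1) = \mathbb{F}[t]/(t-1)^p$ has a nonzero nilpotent, hence is not vN-regular — and vN-regularity of a group ring descends to group subrings $\mathbb{F}[H]$ for $H \leq G$ (again because $\mathbb{F}[H]$ is a direct summand of $\mathbb{F}[G]$ as an $\mathbb{F}[H]$-bimodule via a transversal). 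The "if" direction in characteristic zero (or coprime characteristic) for finite $G$ is Maschke's theorem, which makes $\mathbb{F}[G]$ semisimple hence vN-regular; the locally finite case follows because $\mathbb{F}[G] = \varinjlim \mathbb{F}[H]$ over finite subgroups $H$, and a direct limit of vN-regular rings is vN-regular.

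The main obstacle I anticipate is not any single hard step but rather deciding how much of the Connell–Villamayor theorem to reprove versus cite, and handling the matrix-ring reduction cleanly for $\dim V > 1$ — in particular making sure the "$M_d(S)$ vN-regular $\Rightarrow$ $S$ vN-regular" direction is stated correctly (one uses that $S$ is isomorphic to the corner ring $E_{11} M_d(S) E_{11}$, and corners of vN-regular rings are vN-regular). The infinite-dimensional case of $V$ is deliberately excluded by hypothesis, which is fortunate since $\End_{\mathbb{F}}(V)$ need not then be a matrix ring and the reduction breaks. With the finite-dimensionality in hand, the proof is essentially a chain of "vN-regular $\iff$ vN-regular" equivalences, and the only genuine input from outside is the group-ring characterisation, which I would cite.
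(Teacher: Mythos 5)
Your proposal is correct, and it reaches the same external input as the paper --- Connell's characterisation of von Neumann regular group rings --- but by a slightly different reduction. The paper applies the \emph{general} form of Connell's theorem (\cite[Theorem 3]{C63}): $R[G]$ is vN-regular iff $R$ is vN-regular, $G$ is locally finite, and $o(g)$ is a unit in $R$ for all $g$. It takes $R = \End_{\mathbb{F}}(V) \cong M_n(\mathbb{F})$ directly, checks that this ring is vN-regular (citing Goodearl), and observes that $o(g)$, read as the scalar matrix $o(g)I_n$, is a unit in $M_n(\mathbb{F})$ precisely when $\Char(\mathbb{F}) \nmid o(g)$. You instead first commute the matrix ring past the group ring, $M_d(\mathbb{F})[G] \cong M_d(\mathbb{F}[G])$, invoke the transfer of vN-regularity between $S$ and $M_d(S)$ (via corner rings for the nontrivial direction), and then cite only the \emph{field} case $\mathbb{F}[G]$ of the group-ring theorem. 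The trade-off is clear: your route needs two extra standard lemmas (the matrix--group-ring isomorphism and the corner-ring transfer) but a weaker citation; the paper's route needs the full-strength coefficient-ring version of Connell's theorem but then only a one-line verification of its hypotheses. Both correctly exploit the finite-dimensionality of $V$ at the same point, namely to identify $\End_{\mathbb{F}}(V)$ with a matrix ring; your remark that the argument breaks for infinite-dimensional $V$ is apt. Your optional self-contained sketch of both directions (Maschke plus direct limits for sufficiency; descent to subgroup rings, the Laurent-polynomial domain, and nilpotents in $\mathbb{F}[t]/\langle (t-1)^p\rangle$ for necessity) goes beyond what the paper provides and is sound, though the paper is content to cite rather than reprove.
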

\begin{proof}
By \cite[Theorem 3]{C63} (see also \cite{A,ML}), we have that a group ring $R[G]$ is vN-regular if and only if $R$ is vN-regular, $G$ is locally finite and $o(g)$ is a unit in $R$ for all $g \in G$. In the case of $\LCA(G;V) \cong \End_{\mathbb{F}}(V)[G]$, since $\dim(V) := n < \infty$, the ring $R:=\End_{\mathbb{F}}(V) \cong M_{n\times n}(\mathbb{F})$ is vN-regular (see \cite[Theorem 1.7]{G79}. The condition that $o(g)$, seen as the matrix $o(g) I_n$, is a unit in $M_{n\times n}(\mathbb{F})$ is satisfied if and only if $o(g)$ is nonzero in $\mathbb{\mathbb{F}}$, which is equivalent to $\Char(\mathbb{F}) \nmid o(g)$, for all $g \in G$. 
\end{proof}

\begin{corollary}\label{cor-locally-finite}
Let $G$ be a group and let $V$ be a finite-dimensional vector space over a field $\mathbb{F}$ of characteristic $0$. Then, $\LCA(G;V)$ is vN-regular if and only if $G$ is locally finite. 
\end{corollary}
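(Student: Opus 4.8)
The plan is to derive this immediately from Theorem \ref{vN-regular-ring}, which already characterises vN-regularity of $\LCA(G;V)$ for finite-dimensional $V$ as the conjunction of two conditions: that $G$ be locally finite, and that $\Char(\mathbb{F}) \nmid o(g)$ for every $g \in G$. The observation that does all the work is that the second condition becomes automatic once $\Char(\mathbb{F}) = 0$ and $G$ is locally finite: in that case every $g \in G$ has finite order $o(g) \in \mathbb{N}$, and in a field of characteristic $0$ no nonzero integer multiple of $1_{\mathbb{F}}$ vanishes, so $o(g) \cdot 1_{\mathbb{F}} \neq 0$; equivalently $o(g)$ is a unit in $\mathbb{F}$, which is exactly the form of the condition ``$\Char(\mathbb{F}) \nmid o(g)$'' used in the proof of Theorem \ref{vN-regular-ring}.

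Concretely, for the forward implication I would simply note that if $\LCA(G;V)$ is vN-regular then Theorem \ref{vN-regular-ring} already forces $G$ to be locally finite, and we discard the unused order condition. For the reverse implication, assuming $G$ locally finite, I would verify that both hypotheses of Theorem \ref{vN-regular-ring} are met — local finiteness is the standing assumption, and the order condition holds by the characteristic-$0$ remark above — and conclude that $\LCA(G;V)$ is vN-regular. There is essentially no obstacle here; the only step warranting a sentence of care is spelling out that in characteristic $0$ the symbolic statement $\Char(\mathbb{F}) \nmid o(g)$ should be read as ``$o(g)$ is invertible in $\mathbb{F}$'', which is vacuously true for every finite order $o(g)$, so that the corollary is a genuine specialisation rather than hiding an extra hypothesis.
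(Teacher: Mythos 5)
Your proposal is correct and matches the paper's intent exactly: the paper states this corollary without proof, treating it as an immediate specialisation of Theorem \ref{vN-regular-ring}, and your observation that the condition $\Char(\mathbb{F}) \nmid o(g)$ is automatic in characteristic $0$ (once local finiteness guarantees each $o(g)$ is a finite positive integer) is precisely the reasoning needed. Nothing is missing.
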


Henceforth, we focus on the vN-regular elements of $\LCA(G;V)$ when $V$ is a one-dimensional vector space (i.e. $V$ is just the field $\mathbb{F}$). In this case, $\End_{\mathbb{F}}(\mathbb{F}) \cong \mathbb{F}$, so $\LCA(G;\mathbb{F})$ and $\mathbb{F}[G]$ are isomorphic as $\mathbb{F}$-algebras. 

A non-zero element $a$ of a ring $R$ is called \emph{nilpotent} if there exists $n > 0$ such that $a^n = 0$. The following basic result will be quite useful in the rest of this section.

\begin{lemma}\label{nilpotent}
Let $R$ be a commutative ring. If $a \in R$ is nilpotent, then $a$ is not a vN-regular element.
\end{lemma}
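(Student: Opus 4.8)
The plan is to argue by contradiction using the commutativity of $R$ in an essential way. Suppose $a \in R$ is nilpotent, say $a^n = 0$ with $n$ minimal and $n \geq 2$ (if $n=1$ then $a=0$, contradicting the definition of nilpotent as non-zero), and suppose toward a contradiction that $a$ is vN-regular. Then there exists $b \in R$ with $aba = a$. Since $R$ is commutative, this reads $a^2 b = a$.

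From $a^2 b = a$ I would iterate: multiplying both sides by $a$ gives $a^3 b = a^2$, and more generally $a^{k+1} b = a^k$ for all $k \geq 1$, which follows by an immediate induction (multiply the previous identity by $a$ and use commutativity to keep the $b$ on the right). Taking $k = n-1$ yields $a^n b = a^{n-1}$, i.e. $0 \cdot b = a^{n-1}$, so $a^{n-1} = 0$. This contradicts the minimality of $n$ (recall $n \geq 2$, so $n - 1 \geq 1$ and $a^{n-1}$ was assumed non-zero). Hence $a$ cannot be vN-regular.

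Alternatively — and perhaps more cleanly — one can avoid induction entirely: from $aba = a$ and commutativity, $a = a(ba)$, and $ba$ is idempotent since $(ba)(ba) = b(aba) = ba$. But an idempotent that is also nilpotent must be zero (if $e^2 = e$ and $e^n = 0$, then $e = e^2 = \cdots = e^n = 0$), so $ba = 0$, whence $a = a(ba) = 0$, again contradicting that $a$ is non-zero. I would likely present this second version as the main argument since it is shorter and highlights the structural reason (vN-regular elements factor through idempotents).

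There is no real obstacle here; the only point requiring a little care is the bookkeeping around the definition of nilpotent adopted in the paper (non-zero with $a^n = 0$ for some $n > 0$), so that the conclusion "$a = 0$" genuinely contradicts the hypothesis. Commutativity is used to move $b$ past $a$ freely; without it the statement can fail (nilpotent matrices can be vN-regular in $M_{n}(\mathbb{F})$), so I would make sure the write-up flags exactly where commutativity enters.
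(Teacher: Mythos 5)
Your first argument is exactly the paper's proof: from $a^2b = a$ (obtained from $aba=a$ by commutativity) one multiplies by $a^{n-2}$ to get $a^{n-1} = 0$, contradicting the minimality of $n$. The idempotent variant you prefer is also correct and is only a cosmetic repackaging of the same idea --- commutativity gives $(ba)^n = b^n a^n = 0$, so $ba$ is a nilpotent idempotent, hence zero, hence $a = a(ba) = 0$, which contradicts the paper's convention that nilpotent elements are non-zero (a convention you rightly flag as essential to the statement).
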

\begin{proof}
Let $R$ be a commutative ring and $a \in R$ a nilpotent element. Let $n >0$ be the smallest integer such that $a^n = 0$. Suppose $a$ is a vN-regular element, so there is $x \in R$ such that $axa=a$. By commutativity, we have $a^2 x = a$. Multiplying both sides of this equation by $a^{n-2}$ we obtain $0 = a^{n}  x = a^{n-1}$, which contradicts the minimality of $n$. 
\end{proof}

\begin{example}
Suppose that $G$ is a finite abelian group and let $\mathbb{F}$ be a field such that $\Char(\mathbb{F}) \mid \vert G \vert$. By Theorem \ref{vN-regular-ring}, $\LCA(G;\mathbb{F})$ must have elements that are not vN-regular. For example, let $s := \sum_{g \in G} g \in \mathbb{F}[G]$. As $sg = s$, for all $g \in G$, and $\Char(\mathbb{F}) \mid \vert G \vert$, we have $s^2 = \vert G \vert s = 0$. Clearly, $\mathbb{F}[G]$ is commutative because $G$ is abelian, so, by Lemma \ref{nilpotent}, $s$ is not a vN-regular element. 
\end{example}

We finish this section with the special case when $G$ is the cyclic group $\mathbb{Z}_n$ and $\mathbb{F}$ is a finite field with $\Char(\mathbb{F}) \mid n$. By Theorem \ref{vN-regular-ring}, not all the elements of $\LCA(\mathbb{Z}_n; \mathbb{F})$ are vN-regular, so how many of them are there? In order to count them we need a few technical results about commutative rings.

An \emph{ideal} $I$ of a commutative ring $R$ is a subring such that $rb  \in I$ for all $r \in R$, $b \in I$. For any $a \in R$, the \emph{principal ideal} generated by $a$ is the ideal $\langle a \rangle := \{ ra : r \in R \}$. A ring is called \emph{local} if it has a unique maximal ideal. 

Denote by $\mathbb{F}[x]$ the ring of polynomials with coefficients in $\mathbb{F}$. When $G \cong \mathbb{Z}_n$, we have the following isomorphisms as $\mathbb{F}$-algebras:
\[ \LCA(\mathbb{Z}_n;\mathbb{F}) \cong \mathbb{F}[\mathbb{Z}_n] \cong \mathbb{F}[x] / \langle x^n -1 \rangle, \]
where $\langle x^n -1 \rangle$ is a principal ideal in $\mathbb{F}[x]$.

\begin{theorem} 
Let $n \geq 2$ be an integer, and let $\mathbb{F}$ be a finite field of size $q$ such that $\Char(\mathbb{F}) \mid n$. Consider the following factorization of $x^n - 1$ into irreducible elements of $\mathbb{F}[x]$: 
\[ x^n -1 = p_1(x)^{m_1} p_2(x)^{m_2} \dots p_r(x)^{m_r}. \]
For each $i = 1, \dots r$, let $d_i := \deg(p_i(x))$. Then, the number of vN-regular elements in $\LCA(\mathbb{Z}_n; \mathbb{F})$ is exactly
\[ \prod_{i=1}^{r} \left( ( q^{d_i} - 1) q^{d_i (m_i-1)} +1 \right). \]
\end{theorem}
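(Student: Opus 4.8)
The plan is to exploit the Chinese Remainder Theorem decomposition
\[
\LCA(\mathbb{Z}_n;\mathbb{F}) \cong \mathbb{F}[x]/\langle x^n-1\rangle \cong \prod_{i=1}^{r} \mathbb{F}[x]/\langle p_i(x)^{m_i}\rangle,
\]
which is valid since the $p_i(x)^{m_i}$ are pairwise coprime. Von Neumann regularity is a property that passes through finite direct products coordinatewise: an element $(a_1,\dots,a_r)$ of a product ring is vN-regular if and only if each $a_i$ is vN-regular in its factor (if $a_i b_i a_i = a_i$ for each $i$, then $(b_i)$ is a weak generalised inverse of $(a_i)$, and conversely projecting a weak generalised inverse to each coordinate works). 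Hence the count is multiplicative, and it suffices to count the vN-regular elements in each local ring $R_i := \mathbb{F}[x]/\langle p_i(x)^{m_i}\rangle$ and show this number is $(q^{d_i}-1)q^{d_i(m_i-1)}+1$.

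So the core of the argument is the following local claim: if $R = \mathbb{F}[x]/\langle p(x)^m\rangle$ with $p$ irreducible of degree $d$ over a field $\mathbb{F}$ of size $q$, then the vN-regular elements of $R$ are exactly the units together with $0$, and there are $(q^d-1)q^{d(m-1)}+1$ of them. First I would observe that $R$ is a local ring with maximal ideal $\mathfrak{m} = \langle p(x)\rangle/\langle p(x)^m\rangle$, which is nilpotent (indeed $\mathfrak{m}^m = 0$). Every element of $R$ is either a unit (if not in $\mathfrak{m}$) or lies in $\mathfrak{m}$, hence is nilpotent. Units are trivially vN-regular, and $0$ is vN-regular; conversely, by Lemma \ref{nilpotent}, no non-zero nilpotent element of a commutative ring is vN-regular, so no non-zero element of $\mathfrak{m}$ is vN-regular. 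Therefore the vN-regular elements of $R$ are precisely $R^\times \cup \{0\}$. It remains to count: $|R| = q^{d m}$ since $R$ is an $\mathbb{F}$-vector space of dimension $\deg(p^m) = dm$; the residue field $R/\mathfrak{m} \cong \mathbb{F}[x]/\langle p(x)\rangle$ has size $q^d$, and $|\mathfrak{m}| = |R|/|R/\mathfrak{m}| = q^{dm}/q^d = q^{d(m-1)}$, so $|R^\times| = q^{dm} - q^{d(m-1)} = (q^d-1)q^{d(m-1)}$. Adding the single element $0$ gives $(q^d-1)q^{d(m-1)}+1$, as required.

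Assembling these pieces: the number of vN-regular elements in $\LCA(\mathbb{Z}_n;\mathbb{F})$ equals $\prod_{i=1}^r \left((q^{d_i}-1)q^{d_i(m_i-1)}+1\right)$. The only subtle point worth stating carefully is that at least one $m_i$ exceeds $1$, so that the ring is genuinely not vN-regular — this is exactly where the hypothesis $\Char(\mathbb{F})\mid n$ enters (it guarantees $x^n-1$ has a repeated root, consistent with Theorem \ref{vN-regular-ring}), although the counting formula itself holds verbatim even in the separable case where it collapses to $\prod_i q^{d_i}$, the full size of the ring. I expect the main obstacle to be purely expository rather than mathematical: namely being careful that the decomposition is a ring (and $\mathbb{F}$-algebra) isomorphism so that vN-regularity transfers, and correctly identifying $|R^\times|$ via the local structure rather than attempting a direct gcd-based count of which residues $a(x)$ admit $a(x)b(x)a(x)\equiv a(x)$.
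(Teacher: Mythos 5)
Your proposal is correct and follows essentially the same route as the paper's proof: the Chinese Remainder Theorem decomposition into local factors $\mathbb{F}[x]/\langle p_i(x)^{m_i}\rangle$, the coordinatewise transfer of vN-regularity, the dichotomy that every nonzero element of such a local ring is a unit or nilpotent (with Lemma \ref{nilpotent} ruling out the nonzero nilpotents), and the count of units as $q^{d_im_i}-q^{d_i(m_i-1)}$ via the quotient by the maximal ideal. No gaps.
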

\begin{proof}
Recall that
\[ \LCA(\mathbb{Z}_n;\mathbb{F}) \cong \mathbb{F}[x] / \langle x^n -1 \rangle. \]
By the Chinese Remainder Theorem,
\[ \mathbb{F}[x] / \langle x^n - 1 \rangle \cong \mathbb{F}[x]/ \langle p_1(x)^{m_1} \rangle \times \mathbb{F}[x]/ \langle p_2(x)^{m_2} \rangle \times \dots \times \mathbb{F}[x]/ \langle p_r(x)^{m_r} \rangle. \]
An element $a = (a_1, \dots, a_r)$ in the right-hand side of the above isomorphism is a vN-regular element if and only if $a_i$ is a vN-regular element in $\mathbb{F}[x]/ \langle p_i(x)^{m_i} \rangle$ for all $i = 1, \dots,r$. 

Fix $m := m_i$, $p(x) = p_i(x)$, and $d:=d_i$. Consider the principal ideals $A := \langle p(x) \rangle$ and $B := \langle p(x)^m \rangle$ in $\mathbb{F}[x]$. Then, $\mathbb{F}[x]/B$ is a local ring with unique maximal ideal $A/B$, and each of its nonzero elements is either nilpotent or a unit (i.e. invertible): in particular, the set of units of $\mathbb{F}[x]/B$ is precisely $(\mathbb{F}[x]/B) - (A/B)$. By the Third Isomorphism Theorem, $(\mathbb{F}[x]/B)/(A/B) \cong (\mathbb{F}[x]/A)$, so 
\[ \vert A/B \vert = \frac{\vert \mathbb{F}[x]/B \vert}{\vert \mathbb{F}[x]/A\vert} = \frac{q^{dm}}{q^d} = q^{d(m-1)}. \]
Thus, the number of units in $\mathbb{F}[x]/B$ is 
\[ \vert (\mathbb{F}[x]/B) - (A/B) \vert = q^{dm} - q^{d(m-1)} = (q^d - 1)q^{d(m-1)}. \] 

As nilpotent elements are not vN-regular by Lemma \ref{nilpotent}, every vN-regular element of $\mathbb{F}[x]/ \langle p_i(x)^{m_i} \rangle$ is zero or a unit. Thus, the number of vN-regular elements in $\mathbb{F}[x]/ \langle p_i(x)^{m_i} \rangle$ is $( q^{d_i} - 1) q^{d_i (m_i-1) } +1$.  
\end{proof}

\section{Conclusions and future work}

We studied generalised inverses and von Neumann regular cellular automata over configuration spaces $A^G$. Our main results are the following:
\begin{enumerate}
\item All cellular automata over $A^G$ are vN-regular if and only if $\vert G \vert = 1$ or $\vert A \vert =1$ (Theorem \ref{th:vN-regular}).
\item Out of the $256$ elementary cellular automata over $\{ 0,1 \}^{\mathbb{Z}}$, at least $96$ are vN-regular and $92$ are not vN-regular (Table \ref{elementary}). 
\item If $G$ and $A$ are finite, a cellular automaton $\tau$ over $A^G$ is vN-regular if and only if for every $y \in (A^G)\tau$ there is $x \in A^G$ such that $(x)\tau = y$ and $G_x = G_y$ (Theorem \ref{characterisation}).
\item If $G$ is a torsion-free elementary amenable group and $A$ is a field, a non-zero linear cellular automaton $\tau$ over $A^G$ is vN-regular if and only if it is invertible (Theorem \ref{torsion-free}).
\item If $A$ is a finite-dimensional vector space over a field of characteristic $0$, all linear cellular automata over $A^G$ are vN-regular if and only if $G$ is locally finite (Corollary \ref{cor-locally-finite}).
\end{enumerate}

It could not be determined whether $68$ elementary cellular automata (grouped in $11$ equivalence classes) are vN-regular or not. A clear direction for future work is to examine in more detail these cases. In order to decide their vN-regularity, we need new tools to find generalised inverses and an exhaustive analysis of their behavior on periodic orbits (in order to be able to use Theorem \ref{test-non-regular}).  

\section{Acknowledgment}

We thank Alberto Dennunzio, Enrico Formenti, Luca Manzoni, Luca Mariot and Antonio E. Porreca for the successful organisation of the 23rd International Workshop on Cellular Automata and Discrete Complex Systems (AUTOMATA 2017), in which some of the results of this paper were presented and discussed.


\Addresses

\end{document}